\title[Towards boundedness of minimal log discrepancies]
{Towards boundedness of minimal log discrepancies \\ by Riemann--Roch theorem}
\author{Masayuki Kawakita}
\address{Research Institute for Mathematical Sciences, Kyoto University, Kyoto 606-8502, Japan}
\email{masayuki@kurims.kyoto-u.ac.jp}
\newtheorem{theorem}{Theorem}[section]
\newtheorem{proposition}[theorem]{Proposition}
\newtheorem{lemma}[theorem]{Lemma}
\newtheorem{conjecture}[theorem]{Conjecture}
\newtheorem{problem}[theorem]{Problem}
\newcommand{\bL}{{\mathbb L}}
\newcommand{\bQ}{{\mathbb Q}}
\newcommand{\bR}{{\mathbb R}}
\newcommand{\cJ}{{\mathcal J}}
\newcommand{\cO}{{\mathcal O}}
\newcommand{\fm}{{\mathfrak m}}
\newcommand{\mld}{\operatorname{mld}}
\newcommand{\mult}{\operatorname{mult}}
\newcommand{\ord}{\operatorname{ord}}
\newcommand{\Spec}{\operatorname{Spec}}
\begin{document}
\begin{abstract}
We introduce an approach of Riemann--Roch theorem to the boundedness problem of minimal log discrepancies in fixed dimension. After reducing it to the case of a Gorenstein terminal singularity, firstly we prove that its minimal log discrepancy is bounded if either multiplicity or embedding dimension is bounded. Secondly we recover the characterisation of a Gorenstein terminal three-fold singularity by Reid, and the precise boundary of its minimal log discrepancy by Markushevich, without explicit classification. Finally we provide the precise boundary for a special four-fold singularity, whose general hyperplane section has a terminal piece.
\end{abstract}

\maketitle

A primary study in birational geometry is to find and analyse a good representative in each birational equivalence class, and the minimal model program has been formulated to find the representative by comparison of canonical divisors. It works at present with provision of termination of flips by the work \cite{HM05} of Hacon and McKernan, or in the case of big boundaries by their following work \cite{BCHM06} together with Birkar and Cascini, whereas the termination in the relatively projective case is reduced by Shokurov in \cite{S04} to two conjectures on minimal log discrepancy, a numerical invariant attached to a singularity. Although the two conjectures are believed very difficult, each of them leads as its corollary another related but more accessible conjecture, the boundedness (BDD), that there exists an upper bound of all minimal log discrepancies in fixed dimension. In fact Shokurov has conjectured its precise boundary in \cite{S88}. The purpose of this paper is to introduce an approach of Riemann--Roch theorem to this (BDD).

It is standard to reduce (BDD) to the case of a Gorenstein terminal singularity, Proposition \ref{prp:reduction}. Then we derive (BDD), Theorem \ref{thm:emb_mult_mld}, on the assumption of bounded multiplicity or embedding dimension by the basic property of Riemann--Roch formula that a kind of multiplicity appears in its top term. This theorem would be attractive from the perspective that the minimal log discrepancy measures how singular a variety is. It should be large only in the case of mild singularities, like those with small multiplicity or embedding dimension. This is supported also by the description \cite{EMY03} of minimal log discrepancies in terms of motivic integration by Ein, Musta\c{t}\v{a} and Yasuda.

We proceed the approach of Riemann--Roch theorem by focusing on the second-top term as well as the top one, in which an intersection number with the canonical divisor appears. It is essentially an argument on a surface obtained by cutting out with general hyperplane sections. Besides formulae related to log discrepancies, we derive an interesting property, Proposition \ref{prp:artinian}, of an artinian ring obtained in the same manner, that the power of its maximal ideal to the dimension of the original singularity vanishes.

Now this approach generates an extremely simple proof, Theorem \ref{thm:dim3}, of the results on a Gorenstein terminal three-fold singularity due to Reid and Markushevich in reverse order without explicit classification. It is immediate to deduce the precise boundary, three, \cite{Mk96} of its minimal log discrepancy by Markushevich, without its characterisation. Then it recovers the characterisation \cite{R83} by Reid that its general hyperplane section is canonical, without the detailed study \cite{L77}, \cite{R75} of elliptic surface singularities.

Sadly, we bound minimal log discrepancies in dimension four, Theorem \ref{thm:dim4}, just in a special case, when the singularity has a terminal-like hyperplane section. This limitation is presumably due to lack of the study of special hyperplane sections of a singularity. We provide a few remarks towards an advance of our approach.

A part of this research was achieved during my visit at University of Cambridge. I should like to thank Dr C.\ Birkar for his warm hospitality. Partial support was provided by Grant-in-Aid for Young Scientists (A) 20684002.

\section{Conjectures on minimal log discrepancies}
We work over an algebraically closed field $k$ of characteristic zero throughout. A \textit{pair} $(X,\Delta)$ consists of a normal variety $X$ and a \textit{boundary} $\Delta$ on $X$, which is an effective $\bR$-divisor such that $K_X+\Delta$ is an $\bR$-Cartier $\bR$-divisor. A valuation of the function field of $X$ is called an \textit{algebraic valuation} if it is defined by a prime divisor $E$ on a resolution $\bar{X}$ of singularities of $X$. It is denoted by $v_E$, and the image of $E$ in $X$ is called the \textit{centre} of $v_E$ on $X$. Writing the pull-back of $K_X+\Delta$ to $\bar{X}$ as the sum of $K_{\bar{X}}$ and an $\bR$-divisor $\bar{\Delta}$ whose push-forward to $X$ is $\Delta$, we define the \textit{log discrepancy} $a_E(X,\Delta)$ of $v_E$ with respect to $(X,\Delta)$ as one minus the coefficient of $E$ in $\bar{\Delta}$. For a closed subset $Z$ of $X$, the \textit{minimal log discrepancy} $\mld_Z(X,\Delta)$ of $(X,\Delta)$ over $Z$ is the infimum of $a_E(X,\Delta)$ for all algebraic valuations whose centres are in $Z$. It is either a non-negative real number or minus infinity in dimension at least two, but for convenience we set $\mld_Z(X,\Delta):=-\infty$ even in dimension one if the infimum is negative. The $\mld_Z(X,\Delta)$ is an invariant on the formal scheme of $X$ along $Z$ as remarked in \cite[Theorem 3.2]{K08}. We write as $a_E(X)$ and $\mld_ZX$ simply when the boundary is zero. One should refer to \cite{KM98} for the definitions of ((kawamata, purely, divisorially) log) terminal and (log) canonical singularities, formulated in terms of log discrepancies.

For a pair $(X,\Delta)$, every closed subset $Z$ of $X$ is stratified into a finite union of irreducible constructible subsets $Z_i$ such that $\mld_Z(X,\Delta)$ is equal to the minimum of $\mld_{x_i}(X,\Delta)-\dim Z_i$, where $x_i$ is a general closed point of $Z_i$. Thus we are principally interested in the case when $Z$ is a closed point, and henceforth we consider minimal log discrepancies over closed points only. One can generalise Conjectures \ref{cnj:LSCandACC}, \ref{cnj:PIA} and Problem \ref{prb:BDD} below straightforwardly.

Our main motivation to study minimal log discrepancies is their role in the minimal model program. Birkar, Cascini, Hacon and McKernan in \cite{BCHM06}, combining a former work \cite{HM05} or \cite{HM08} of the last two authors, proved that this program runs at least in one direction when the boundary is relatively big, from which the existence of flips follows. Now the minimal model program works with provision of termination of flips, and the termination in the relatively projective case has been reduced to the two conjectures below on minimal log discrepancies by Shokurov in \cite{S04}.

\begin{conjecture}\label{cnj:LSCandACC}
\setlength{\leftmargini}{2em}
\begin{enumerate}
\item
\textup{(LSC, lower semi-continuity \cite[Conjecture 2.4]{Am99})} \
For a pair $(X,\Delta)$, the function on the set of closed points of $X$ sending $x$ to $\mld_x (X,\Delta)$ is lower semi-continuous.
\item\label{itm:ACC}
\textup{(ACC, ascending chain condition \cite{S88}, \cite[Conjecture 4.2]{S96})} \
Fix $n$ and a finite sequence $\{d_i\}$ of real numbers. Then the set of all $\mld_x(X,\Delta)$ for pairs $(X,\Delta)$ of dimension $n$ such that $\Delta$ has its irreducible decomposition $\sum_i d_i\Delta_i$, satisfies the ascending chain condition.
\end{enumerate}
\end{conjecture}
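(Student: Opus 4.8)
These two assertions are among the central open problems of the subject—the author himself calls them ``very difficult''—so I would not expect to settle either in full, and the plan below only describes the lines of attack I would pursue together with the point at which each breaks down. For the lower semi-continuity (i) the natural framework is the arc space: following Ein, Musta\c{t}\v{a} and Yasuda, the quantity $\mld_x(X,\Delta)$ is read off from the codimensions of the contact loci cut out, in the space of arcs centred at $x$, by the exceptional divisors of a log resolution of $(X,\Delta)$. The plan is to organise these contact loci into a family over $X$ and to deduce semi-continuity from the upper semi-continuity of fibre dimensions in families of truncated jet schemes $J_m$. Concretely I would fix a log resolution, express the sublevel condition $\mld_x(X,\Delta)\le a$ through the dimensions of the cylinders associated to the exceptional divisors, then verify first that these dimensions are locally computed by finitely many divisors, and second that the locus where a given divisor is the one realising the infimum is constructible and behaves well under specialisation to more singular points.

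For the ascending chain condition (ii) I would separate the two sources of accumulation. The coefficients $d_i$ are fixed, so the only freedom lies in the geometry of $(X,\Delta)$ and in the valuation computing the infimum. The plan is first to bound the values from above along any putative strictly increasing sequence—this is exactly the content of (BDD), towards which the present paper is directed—and then to argue that, once the values are confined to a bounded interval, the infimum is realised by valuations of bounded log discrepancy, hence conjecturally by divisors drawn from a bounded family. An infinite ascending chain would then force infinitely many distinct values within a bounded family, which one would hope to rule out by a limiting or specialisation argument combined with the lower semi-continuity of (i).

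In both parts the main obstacle is the same: the absence of any \emph{a priori} control on the valuation realising the infimum. The minimal log discrepancy is defined over all algebraic valuations centred at $x$, of which there are infinitely many, and it is not known that it is computed by a divisor whose discrepancy over $X$, or the number of blow-ups needed to extract it, is bounded uniformly over the family. For (i) this is what prevents collapsing the semi-continuity onto a single finite constructible stratification; for (ii) it is precisely the gap between (BDD) and (ACC). This is why I would regard the Riemann--Roch estimates of the present paper, which bound $\mld_x X$ under a bound on multiplicity or embedding dimension, as the essential first step: they begin to supply the missing boundedness from which a uniform control of the computing valuations—and thereby both conjectures—could conceivably be extracted.
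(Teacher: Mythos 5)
This statement is a conjecture, not a theorem: the paper offers no proof of it, and indeed explicitly describes (LSC) and (ACC) as open and ``very difficult,'' known only in dimension two and (for LSC and PIA) in the local complete intersection case via motivic integration. You correctly decline to claim a proof, and your sketch is consistent with the state of the art as the paper itself presents it --- the arc-space approach to (LSC) is exactly the route of Ein, Musta\c{t}\v{a} and Yasuda that yields the lci case, and your observation that (ACC) first requires the boundedness of values is precisely the paper's motivation for isolating (BDD) as Problem \ref{prb:BDD}. There is nothing to compare against and no gap to report; your identification of the missing ingredient (uniform control of the valuation computing the infimum) is an accurate diagnosis of why both conjectures remain open.
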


Although the statement (\ref{itm:ACC}), or a more general one, is a standard formulation of (ACC), a weaker version of it besides (LSC) is enough to derive the termination of flips, where a pair $(X,\Delta)$ is fixed and we consider the set of all $\mld_{x'}(X',\Delta')$ for $(X',\Delta')$ obtained from $(X,\Delta)$ by a sequence of $K_X+\Delta$-flips. We have (LSC) in dimension two by the classification of surface singularities, and (ACC) in dimension two thanks to the deep numerical analysis of surface singularities by Alexeev in \cite{Al93}. In dimension at least three, we have only (LSC) in the case of local complete intersection, which was proved as well as (PIA) stated below by Ein, Musta\c{t}\v{a} and Yasuda in \cite{EM04}, \cite{EMY03} by the theory of motivic integration. 

The first conjecture (LSC) implies that $\mld_x(X,\Delta)$ is bounded from above by the dimension of $X$ as it holds for a smooth point $x$ trivially, whereas the second conjecture (ACC), with a reduction to the case of no boundaries, implies the boundedness of minimal log discrepancies in fixed dimension. Therefore a basic problem towards Conjecture \ref{cnj:LSCandACC} would be the following.

\begin{problem}[BDD, boundedness]\label{prb:BDD}
For each $d$, find a real number $a(d)$ such that all minimal log discrepancies in dimension $d$ are at most $a(d)$.
\end{problem}

In fact Shokurov has conjectured in \cite{S88} the precise boundary $a(d)=d$, and that $\mld_x(X,\Delta)$ should attain $d$ if and only if $x$ is a smooth point outside the support of $\Delta$. The conjecture (BDD), which is discussed in this paper, is not known even in dimension four, and we have had $a(3)=3$ after the explicit classification \cite{R83} of Gorenstein terminal three-fold singularities with \cite{EMY03} or \cite{Mk96}.

As it has been indicated already, there exists one more conjecture on minimal log discrepancies, closely related to those above.

\begin{conjecture}[PIA, precise inversion of adjunction {\cite[Chapter 17]{K+92}}]\label{cnj:PIA}
Let $(X,S+B)$ be a pair such that $S$ is a normal prime divisor not contained in the support of $B$, and $x$ a closed point of $S$. One can construct the different $B_S$ which is a boundary on $S$ such that $K_S+B_S$ is the restriction of $K_X+S+B$ to $S$. Then $\mld_x(X,S+B)=\mld_x(S,B_S)$.
\end{conjecture}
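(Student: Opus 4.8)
The plan is to split the asserted equality into the two opposite inequalities and to treat them by entirely different mechanisms. The inequality $\mld_x(X,S+B)\le\mld_x(S,B_S)$ is the \emph{adjunction} half. I would fix a log resolution $f\colon Y\to X$ of $(X,S+B)$ so that the exceptional locus, the strict transform $S_Y$ of $S$, and $f^{-1}(x)$ form a simple normal crossing divisor, and restrict the relation defining the log discrepancies, $K_Y+S_Y+B_Y=f^*(K_X+S+B)+\sum_E(a_E(X,S+B)-1)E$ with $E$ ranging over the $f$-exceptional primes, to $S_Y$. Since $K_{S_Y}=(K_Y+S_Y)|_{S_Y}$ and, by construction of the different, $(K_X+S+B)|_S=K_S+B_S$, every prime $F$ on $S_Y$ cut out transversally as $E\cap S_Y$ satisfies $a_F(S,B_S)=a_E(X,S+B)$. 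Conversely, any divisorial valuation over $S$ is the trace of one over $X$: a sequence of blow-ups of $S$ producing a given $F$ lifts to blow-ups of $Y$ along the same centres, which lie in the smooth $S_Y$ and meet it transversally, and the lifted divisor $E$ has centre in $x$ with $a_E(X,S+B)=a_F(S,B_S)$. Taking the infimum yields $\mld_x(X,S+B)\le\mld_x(S,B_S)$.

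The substance of the conjecture is the reverse inequality $\mld_x(X,S+B)\ge\mld_x(S,B_S)$, which must exclude divisors $E$ over $X$ with centre $x$ that do \emph{not} restrict to any divisor on $S$ and yet have small log discrepancy. Here I would follow the motivic-integration strategy of Ein, Musta\c{t}\v{a} and Yasuda in \cite{EMY03}, \cite{EM04}. The idea is to express both minimal log discrepancies through codimensions of contact loci in arc spaces: among the arcs $\gamma\colon\Spec k[[t]]\to X$ with $\gamma(0)=x$, those factoring through $S$ constitute exactly the arcs of $S$ through $x$, and a change-of-variables computation measures the defect of this inclusion precisely by the coefficients of the different $B_S$. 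Comparing the measures of the thin sets that compute the two invariants then forces $\mld_x(X,S+B)\ge\mld_x(S,B_S)$, since every valuation over $X$ is accounted for by a cylinder of arcs whose order of contact with $S$ is controlled.

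The main obstacle is that this arc-space argument is presently available only when $X$ is a local complete intersection, where the jet schemes $J_m(X)$ have the expected dimension $(m+1)\dim X$ and the arc space carries a motivic measure with a valid change-of-variables formula. For a merely Gorenstein, or normal, $X$ the arc space may be reducible with components of unexpected dimension, the measure-theoretic bookkeeping collapses, and there is at present no substitute that bounds the ``non-restricting'' divisors from below. I expect this to be the crux: one needs either a dimension estimate for $J_m(X)$ beyond the lci case, or a genuinely birational mechanism---perhaps in the spirit of the Riemann--Roch analysis of this paper, applied after the reduction to a Gorenstein terminal singularity with a general hyperplane section playing the role of $S$---that controls the log discrepancy of an arbitrary divisor over $X$ by that of its trace on $S$. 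It is the demand for an exact equality, rather than a mere inequality of thresholds, that makes the problem delicate, since Riemann--Roch governs only the top and second-top terms and not the full valuative infimum.
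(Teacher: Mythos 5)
This statement is labelled a \emph{conjecture} in the paper, and the paper supplies no proof of it; it only records that (PIA) is known when both $X$ and $S$ are local complete intersections, by \cite{EM04} and \cite{EMY03}, and that a related weaker statement holds in the purely log terminal range by \cite{BCHM06} following \cite{K+92}. So there is no proof in the paper against which to compare yours, and your proposal, quite rightly, does not claim to be one. What you have actually established is only the inequality $\mld_x(X,S+B)\le\mld_x(S,B_S)$: lifting a divisor $F$ over $S$ with centre $x$ to a divisor $E$ over $X$ by blowing up the same centres inside the strict transform of $S$ on a log resolution, and matching $a_E(X,S+B)=a_F(S,B_S)$ by adjunction, is the standard argument and is essentially correct (one should check that the centre of the lifted $E$ on $X$ is indeed the closed point $x$ and that the codimension bookkeeping for the blow-up of a centre contained in $S_Y$ gives equal log discrepancies, but both are routine).

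The genuine gap is exactly where you locate it: the reverse inequality $\mld_x(X,S+B)\ge\mld_x(S,B_S)$, which must control divisors over $X$ with centre $x$ that bear no relation to $S$. The motivic-integration mechanism of \cite{EMY03}, \cite{EM04} that you invoke requires $X$ to be a local complete intersection so that the jet schemes have the expected dimension and the change-of-variables formula applies; outside that case no substitute is known, and nothing in your proposal supplies one. Your closing speculation that the Riemann--Roch analysis of this paper might be adapted is natural but unsupported: as the author notes, that method sees only the top and second-top terms of the Riemann--Roch formula and extracts bounds on specific intersection numbers such as $(K'\cdot\bar{H}^{d-1})$, not the exact valuative infimum that (PIA) demands. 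In short, your proposal is an accurate survey of the state of the problem rather than a proof, and the half you leave open is precisely the content of the conjecture.
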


Inversion of adjunction has its origin in the connectedness lemma \cite[17.4 Theorem]{K+92}, \cite[5.7]{S92}, which implies that $(X,S+B)$ is purely log terminal about $S$ if and only if $(S,B_S)$ is kawamata log terminal. It was extended to the equivalence of log canonicity in \cite{K07}. Its precise version (PIA) holds in the case when both $X$ and $S$ are local complete intersection by \cite{EM04}, \cite{EMY03}. Referring to \cite[Chapter 17]{K+92} we may consider a strong version of (PIA), on a variant $\mld'_x(X,S+B)$ of minimal log discrepancy defined as the infimum of $a_E(X,S+B)$ for $v_E$ with $E$ exceptional whose centre intersects $S$ exactly at $x$. Then we expect that $\mld'_x(X,S+B)$ should be equal to $\mld_x(X,S+B)$, hence to $\mld_x(S,B_S)$, and actually it was proved in the purely log terminal case with $\mld'_x(X,S+B)\le1$ by \cite{BCHM06} after the idea in \cite[Chapter 17]{K+92}.

\section{Arbitrary dimension}
Returning to Problem \ref{prb:BDD}, we start with the following standard reduction. Note that this reduction is applicable also to the precise version of (BDD).

\begin{proposition}\label{prp:reduction}
Let $(x \in X,\Delta)$ be a germ of a pair. Then there exists a germ $y\in Y$ of a $\bQ$-factorial Gorenstein terminal singularity of the same dimension as $X$ has such that $\mld_x(X,\Delta) \le \mld_yY$.
\end{proposition}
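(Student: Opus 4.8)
The plan is to build $Y$ by a short chain of birational and finite operations, arranging that each one preserves the dimension and never decreases the minimal log discrepancy over the point being tracked, so that the starting value $\mld_x(X,\Delta)$ is bounded above by $\mld_yY$ at the end. First I would dispose of the degenerate case: if $\mld_x(X,\Delta)\le0$ the inequality holds for any $\bQ$-factorial Gorenstein terminal germ of the same dimension as $X$ (a smooth germ already works, having $\mld$ equal to the dimension), so I may assume $\mld_x(X,\Delta)>0$, whence $(X,\Delta)$ is kawamata log terminal near $x$. Since $K_X$ need not be $\bR$-Cartier, I cannot remove the boundary at once; instead I would pass to a small $\bQ$-factorialisation $g\colon(W,\Delta_W)\to(X,\Delta)$ with $\Delta_W=g_*^{-1}\Delta$, which exists for a klt pair by \cite{BCHM06} and is crepant, $K_W+\Delta_W=g^*(K_X+\Delta)$. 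As $g$ is an isomorphism in codimension one and crepant, the valuations over $x$ are unchanged together with their log discrepancies, so $\mld_x(X,\Delta)=\mld_{g^{-1}(x)}(W,\Delta_W)\le\mld_w(W,\Delta_W)$ for any closed point $w\in g^{-1}(x)$. Now $W$ is $\bQ$-factorial, so $K_W$ is $\bQ$-Cartier and, $\Delta_W$ being effective, one has $a_E(W,\Delta_W)\le a_E(W)$ for every $E$; hence $\mld_w(W,\Delta_W)\le\mld_wW$ and $W$ is klt.

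Next I would make the singularity Gorenstein by the index-one (canonical) cover $\pi\colon(w^\sharp\in W^\sharp)\to(w\in W)$, which is quasi-étale and crepant, $K_{W^\sharp}=\pi^*K_W$, with $W^\sharp$ klt of index one and so Gorenstein. The essential point is that $\mld$ does not drop under $\pi$: for a divisor $F$ over $w^\sharp$ lying over $E$ over $w$ with ramification index $r\ge1$, a Riemann--Hurwitz computation using $K_{W^\sharp}=\pi^*K_W$ gives $a_F(W^\sharp)=r\,a_E(W)$, and since $W$ is klt one has $a_E(W)\ge\mld_wW\ge0$, so $a_F(W^\sharp)\ge\mld_wW$; taking the infimum over $F$ yields $\mld_{w^\sharp}W^\sharp\ge\mld_wW$. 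Because $W^\sharp$ is Gorenstein and klt, all its discrepancies are positive integers, so it is in fact canonical.

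Finally I would terminalise. By \cite{BCHM06} a Gorenstein canonical germ carries a crepant $\bQ$-factorial terminalisation $f\colon Y\to W^\sharp$, $K_Y=f^*K_{W^\sharp}$, with $Y$ both $\bQ$-factorial and terminal; since $f$ is crepant and $W^\sharp$ is Gorenstein, every divisor over $Y$ has an integral discrepancy, so $K_Y$ is Cartier and $Y$ is Gorenstein terminal. Crepancy identifies the valuations over $w^\sharp$ with those centred in $f^{-1}(w^\sharp)$, preserving log discrepancies, so $\mld_{w^\sharp}W^\sharp=\mld_{f^{-1}(w^\sharp)}Y\le\mld_yY$ for any closed point $y\in f^{-1}(w^\sharp)$. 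Concatenating the inequalities produced at each stage gives $\mld_x(X,\Delta)\le\mld_yY$ with $y\in Y$ a $\bQ$-factorial Gorenstein terminal germ of the same dimension, which is the assertion.

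The deep inputs are the existence of the small $\bQ$-factorialisation and of the terminalisation, both supplied by \cite{BCHM06}; granting these, the single computation demanding care is the transformation of log discrepancies under the index-one cover, where the ramification index enters as a factor $r\ge1$ and is precisely what forbids $\mld$ from decreasing on passing to the cover. The main point I expect to have to check carefully at every transition is that the current model lies in the intended class---klt, then Gorenstein canonical, then Gorenstein terminal---and that the operation fixes or raises $\mld$ over the tracked point; the verification that the terminalisation of a Gorenstein canonical germ is again Gorenstein (integrality of all discrepancies) is the most delicate of these bookkeeping steps.
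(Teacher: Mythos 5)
Your overall strategy is the same as the paper's: pass to a crepant $\bQ$-factorial model with effective boundary, drop the boundary, take the index-one cover, then a $\bQ$-factorial terminalisation, tracking $\mld$ through the chain. The last three steps are fine; in particular your Riemann--Hurwitz justification $a_F(W^\sharp)=r\,a_E(W)\ge a_E(W)\ge\mld_wW$ for the index-one cover is correct (and makes explicit what the paper only asserts), and the bookkeeping that the terminalisation stays Gorenstein is right. The gap is at the very first step: from $\mld_x(X,\Delta)>0$ you conclude that $(X,\Delta)$ is kawamata log terminal near $x$, and this implication is false. The invariant $\mld_x$ only constrains valuations whose centre is the closed point $x$; it says nothing about divisors whose centre is a positive-dimensional subvariety through $x$. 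For example, for $X=\mathbb{A}^2$ and $\Delta=C$ a line through the origin with coefficient one, $\mld_0(X,\Delta)=1>0$, yet $a_C(X,\Delta)=0$ on every neighbourhood of the origin, so the pair is not klt near $0$. What does follow from $\mld_x(X,\Delta)\ge0$ is only log canonicity near $x$, and for a merely lc pair the small $\bQ$-factorialisation you invoke is not supplied by \cite{BCHM06}. Everything downstream (the crepant model $W$, its klt-ness, hence the Gorenstein property of the index-one cover) rests on this unavailable step.

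The repair is exactly the paper's construction of $W$: take a general $\Delta'\sim_{\bR}\Delta$, a log resolution $f\colon\bar{X}\to X$ of $(X,(\Delta+\Delta')/2)$, and run the relative minimal model program for the klt pair $(\bar{X},(1-\epsilon)\bar{\Delta}+\epsilon\bar{\Delta}')$ over $X$; the negativity lemma then shows every exceptional divisor of the resulting $g\colon W\to X$ has log discrepancy at most one with respect to $(X,\Delta)$, so one can write $K_W+\Delta_W=g^*(K_X+\Delta)$ with $\Delta_W$ an effective boundary, and $W$ is $\bQ$-factorial klt because it is an MMP output of a smooth variety with a klt boundary. This $W$ is not small over $X$, but smallness plays no role in your subsequent inequalities --- you only use crepancy, effectivity of $\Delta_W$, and the $\bQ$-Cartier property of $K_W$ --- so with this substitution the rest of your argument goes through unchanged.
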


\begin{proof}
We may assume that $(X,\Delta)$ is log canonical, equivalently $\mld_x(X,\Delta)\ge0$. First we shall construct a projective birational morphism $g\colon W\to X$ from a log terminal variety $W$ and a boundary $\Delta_W$ such that $K_W+\Delta_W=g^*(K_X+\Delta)$. Take a log resolution $f\colon\bar{X}\to X$ of $(X,(\Delta+\Delta')/2)$ for a general effective $\bR$-divisor $\Delta'\sim_{\bR}\Delta$. Let $\bar{\Delta},\bar{\Delta}'$ denote the strict transforms of $\Delta,\Delta'$. According to \cite{BCHM06} we run the minimal model program over $X$ for the kawamata log terminal pair $(\bar{X},(1-\epsilon)\bar{\Delta}+\epsilon\bar{\Delta}')$ for a small positive real number $\epsilon$ to obtain its relative minimal model $g\colon W\to X$. Then every $g$-exceptional divisor has log discrepancy at most one with respect to $(X,(1-\epsilon)\Delta+\epsilon\Delta')$ by the negativity lemma \cite[2.19 Lemma]{K+92}, and so with respect to $(X,\Delta)$ as $\epsilon$ is small, whence we have the desired boundary $\Delta_W$ on $W$.

Consider a germ of $W$ at a closed point $w\in g^{-1}(x)$. Take its index-one cover $\hat{w}\in\hat{W}\to w\in W$, the covering associated to the $\bQ$-Cartier divisor $K_W$, and a $\bQ$-factorial terminalisation $h\colon Y\to\hat{W}$ by \cite{BCHM06}. Then for a closed point $y\in h^{-1}(\hat{w})$ we have $\mld_x(X,\Delta)\le\mld_w(W,\Delta_W)\le\mld_wW\le\mld_{\hat{w}}\hat{W}\le\mld_yY$.
\end{proof}

We shall not use the $\bQ$-factorial property in this paper. We try to bound minimal log discrepancies of Gorenstein terminal singularities, but in this section we allow Gorenstein canonical singularities since this relaxation does not affect any statements.

We have an experimental knowledge that the minimal log discrepancy measures how singular a variety is. For example, a surface singularity is smooth if its minimal log discrepancy is greater than one, is a Du Val singularity if it is at least one, and is a quotient singularity if it is greater than zero. It brings us expecting that Problem \ref{prb:BDD} should be reduced to the case of mild singularities, like those with small multiplicity or embedding dimension. This expectation is supported also by the theory of motivic integration. Roughly speaking, for a scheme $X$ its jet scheme $J_nX$ is the collection of morphisms $\Spec k[t]/(t^{n+1})\to X$, and the arc space $J_\infty X$, the inverse limit of them, is that of morphisms $\Spec k[[t]]\to X$. Set $\pi_n\colon J_nX\to X$, $\pi_{nm}\colon J_mX\to J_nX$. For a Gorenstein canonical singularity $x\in X$ of dimension $d$, the ideal sheaf $\cJ_X$ is the image of the natural map $\Omega_X^d\otimes\cO_X(-K_X)\to\cO_X$. Then the minimal log discrepancy is described as $\mld_x X =-\dim\int_{\pi_\infty^{-1}(x)}\bL^{\ord_{\cJ_X}}d\mu_X$ in terms of motivic integration by \cite{EMY03}. It means that for $j$, $n\gg j$ and a constructible subset $U$ of $\pi_n^{-1}(x)$ on which $\cJ_X$ has constant order $j$, $\mld_xX$ is at most $(n+1)d-j-\dim\pi_{nm}(\pi_{nm}^{-1}(U))$ for $m\gg n$. Hence $\mld_xX$ should be small when $X$ has large jet schemes, particularly large $J_1X$, the total tangent space.

The following theorem supplies (BDD) with provision of the boundedness of multiplicity or embedding dimension.

\begin{theorem}\label{thm:emb_mult_mld}
\setlength{\leftmargini}{2em}
\begin{enumerate}
\item\label{itm:emb_mult}
For each $e$ there exists a number $m(e)$ such that an arbitrary Gorenstein canonical singularity of embedding dimension at most $e$ has multiplicity at most $m(e)$.
\item\label{itm:mult_mld}
A Gorenstein canonical singularity has minimal log discrepancy at most its dimension times its multiplicity.
\end{enumerate}
\end{theorem}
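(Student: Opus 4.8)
The plan is to isolate the multiplicity as the leading intersection datum at $x$ in both assertions, reducing each to a single estimate on the next Riemann--Roch term.

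For (i) I would pass to an Artinian model. A Gorenstein canonical singularity is rational, hence Cohen--Macaulay, so cutting $x\in X$ by $d=\dim X$ general hyperplanes through $x$ yields an Artinian Gorenstein local ring $\bar R$ with $\dim_k\bar R=\mult_x X=:m$ and embedding dimension $e-d\le e$. Since the length of an Artinian local ring is not controlled by embedding dimension alone, as $k[t]/(t^N)$ shows, the whole point is to bound the socle degree $s$, the largest $i$ with $\fm^i\ne0$ in $\bar R$, by a quantity depending only on $d\le e$; granting such a bound one finishes combinatorially by
\[
m=\dim_k\bar R\le\sum_{i=0}^{s}\dim_k\fm^i/\fm^{i+1}\le\sum_{i=0}^{s}\binom{e-d-1+i}{i}.
\]
To bound $s$ I would feed canonicity through the order valuation $\ord_{\fm}$, whose exceptional fibre is the projectivised tangent cone: non-negativity of its discrepancy should cap the top graded piece of the associated graded ring, hence $s$. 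This matches the hypersurface case, where $s=m-1$ and canonicity reads $m\le d$.

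For (ii) I would realise $m$ as a top self-intersection and read $\mld_x X$ off the same divisors. Take a resolution $f\colon\bar X\to X$ dominating the blow-up of $\fm$, write $\fm\cO_{\bar X}=\cO_{\bar X}(-D)$ with $D=\sum_i b_iE_i$, all $b_i\ge1$ and all $E_i$ lying over $x$, and $K_{\bar X}-f^*K_X=\sum_i\alpha_iE_i$ with $\alpha_i\ge0$ by canonicity. Setting $\Lambda=-D$, the $f$-nef class $c_1(\fm\cO_{\bar X})$, the realisation of multiplicity as a degree gives $m=-\Lambda^d$. Since $\Lambda$ is $f$-nef, the class $\Lambda^{d-1}$ pairs non-negatively with each effective $E_i$, so writing $w_i=E_i\cdot\Lambda^{d-1}\ge0$ and using $\sum_i b_iw_i=D\cdot\Lambda^{d-1}=-\Lambda^d=m$ with $b_i\ge1$ one gets $\sum_i w_i\le m$.

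Because $\Lambda^{d-1}$ is supported on fibres, $f^*K_X\cdot\Lambda^{d-1}=0$, whence $\sum_i\alpha_iw_i=K_{\bar X}\cdot\Lambda^{d-1}$, and the weighted average of the log discrepancies $\alpha_i+1$ against the weights $w_i$ is
\[
\frac{\sum_i(\alpha_i+1)w_i}{\sum_i w_i}=1+\frac{K_{\bar X}\cdot\Lambda^{d-1}}{\sum_i w_i}.
\]
As $\mld_x X\le\min_i(\alpha_i+1)$ is at most this average, and $\sum_i w_i\ge1$ by integrality of the intersection numbers on $\bar X$, it suffices to prove $K_{\bar X}\cdot\Lambda^{d-1}\le dm-1$. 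This quantity is exactly the second-top Riemann--Roch term: a canonical degree of the $(d-1)$-dimensional exceptional fibre polarised by $\Lambda$, whose degree is $m$. I would bound it by a multiple of $m$ through Riemann--Roch on that fibre, the comparison with the top term producing the factor $d$.

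The main obstacle in (i) is converting canonicity into the socle-degree bound, and in (ii) the estimate $K_{\bar X}\cdot\Lambda^{d-1}\le dm-1$ on the second-top term. In both cases the difficulty is the same in spirit, namely controlling the sub-leading Riemann--Roch data by the leading multiplicity; it is precisely here that the positivity of $\Lambda$ and the canonicity of $X$ must be played against each other.
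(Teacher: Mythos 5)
Both halves of your proposal correctly identify the shape of the problem --- the multiplicity sits in the top Riemann--Roch term and the canonical degree in the second one --- but in each half the entire burden is shifted onto a key estimate that you do not prove, and your suggested routes to those estimates are not viable as stated. For (\ref{itm:emb_mult}), your combinatorial bound $\dim_k\bar R\le\sum_{i=0}^{s}\binom{e-d-1+i}{i}$ is fine \emph{granting} a socle-degree bound $s\le d-1$, i.e.\ $\fm^d=0$ in the Artinian reduction. That statement is true, but it is precisely Proposition \ref{prp:artinian} of the paper, whose proof requires the full inductive principle and the surface Riemann--Roch computation of Lemmata \ref{lem:naiveRR} and \ref{lem:RRonC0} --- it is substantially harder than (\ref{itm:emb_mult}) itself. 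Your proposed mechanism, ``feed canonicity through $\ord_{\fm}$,'' does not work as a one-line argument: $\ord_{\fm}$ need not be divisorial, and non-negativity of discrepancies of the Rees valuations of $\fm$ gives no evident cap on the top graded piece of $\mathrm{gr}_{\fm}\cO_{X,x}$. The paper's proof of (\ref{itm:emb_mult}) bypasses all of this: with $\fm_x\cO_{\bar X}=\cO_{\bar X}(-E)$, Kawamata--Viehweg vanishing identifies $P(l)=\chi(\cO_E(K_{\bar X/X}|_E+K_E-(l+1)E|_E))$, a polynomial of degree $d-1$ with leading coefficient $\mult_xX/(d-1)!$, with the dimension of a sub-quotient of $\cO_X/\fm_x^{l+1}$ (here canonicity enters, via $f_*\cO_{\bar X}(K_{\bar X/X}-lE)\supseteq\fm_x^l$), so $0\le P(l)\le\binom{e+l}{e}$ forces finitely many possibilities for $P$ and hence for its leading coefficient.

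For (\ref{itm:mult_mld}), your averaging argument is internally consistent (the positivity of the weights $w_i=(E_i\cdot\Lambda^{d-1})$ and $\sum_ib_iw_i=m$ are correct), and the estimate you need, $K_{\bar X/X}\cdot\Lambda^{d-1}\le dm-1$, is indeed true --- it follows from Lemma \ref{lem:naiveRR}(\ref{itm:KonS}), which gives $(K'\cdot\bar H^{d-1})=(d-1)m-2s_{d-2}$ with $s_{d-2}\ge0$. But ``Riemann--Roch on that fibre'' is exactly where the difficulty lives: the exceptional fibre is a non-reduced, reducible, singular scheme, and without the vanishing theorem there is no control on the sign of the sub-leading term of $\chi$; the non-negativity $s_{d-2}\ge0$ comes only from identifying $\chi$ with the dimension of a quotient of ideals after cutting to a smooth surface $\bar S$. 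So you have reduced (\ref{itm:mult_mld}) to an unproven inequality whose proof is the actual content. Note also that the paper's own proof of (\ref{itm:mult_mld}) avoids the second term entirely: choosing $E_0$ with $(E_0\cdot\bar H^{d-1})>0$, the polynomial $Q(l)=\chi(\cO_{E_0}(K_{E_0}-lE|_{E_0}))$ of degree $d-1$ vanishes at every $l$ with $lm_0<a_{E_0}(X)$ (the relevant quotient of direct images would be spanned by regular functions of negative multiplicity along $E_0$), and a nonzero polynomial of degree $d-1$ cannot vanish at the $d$ consecutive integers $1,\dots,d$; hence $a_{E_0}(X)\le dm_0\le dm$. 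That gap-of-roots argument is both shorter and sharper than the averaging you propose, and it needs no information about the second-top term.
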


\begin{proof}
Let $x \in X$ be a Gorenstein canonical singularity of dimension $d$. Take a log resolution $f\colon\bar{X}\to X$ such that the strict transform $\bar{H}$ of a general hyperplane section $H$ of $X$ through $x$ is $f$-free. We write $f^*H=\bar{H}+E$, then $\fm_x\cO_{\bar{X}}=\cO_{\bar{X}}(-E)$ for the maximal ideal sheaf $\fm_x$, and $\mult_xX=(E\cdot\bar{H}^{d-1})$ by the Cohen--Macaulay property of $X$. Set $K_{\bar{X}/X}:=K_{\bar{X}}-f^*K_X$, which is an exceptional divisor.

(\ref{itm:emb_mult})
We suppose $d\le\dim\fm_x/\fm_x^2\le e$. Take the exact sequences
\begin{align*}
0 \to \cO_{\bar{X}}(K_{\bar{X}/X}-(l+1)E) \to \cO_{\bar{X}}(K_{\bar{X}/X}-lE) \to \cO_E(K_E-(l+1)E|_E) \to 0
\end{align*}
and consider the polynomial $P(l):=\chi(\cO_E(K_E-(l+1)E|_E))$ of degree $d-1$ in $l$. Since $R^if_*\cO_{\bar{X}}(K_{\bar{X}/X}-lE)=0$ for $i\ge1,l\ge0$ by Kawamata--Viehweg vanishing theorem \cite[Theorem 1-2-3]{KMM87}, we have for $l\ge0$
\begin{align*}
P(l)=\dim f_*\cO_{\bar{X}}(K_{\bar{X}/X}-lE)/f_*\cO_{\bar{X}}(K_{\bar{X}/X}-(l+1)E).
\end{align*}
Because of the canonicity of $X$, the direct image sheaf $f_*\cO_{\bar{X}}(K_{\bar{X}/X}-lE)$ contains $f_*\cO_{\bar{X}}(-lE)$, which contains $\fm_x^l$. Hence the sheaf $f_*\cO_{\bar{X}}(K_{\bar{X}/X}-lE)/f_*\cO_{\bar{X}}(K_{\bar{X}/X}-(l+1)E)$ is a sub-quotient sheaf of $\cO_X/\fm_x^{l+1}$, whose dimension is bounded by $\binom{e+l}{e}$. Therefore the polynomial $P(l)$ has only finite possibilities, whence so does the coefficient $\mult_xX/(d-1)!$ of $l^{d-1}$ in $P(l)$. 

(\ref{itm:mult_mld})
We write $E=\sum_im_iE_i$ for its irreducible decomposition, and choose an $E_0$ such that $\bar{H}|_{E_0}$ is big, equivalently $(E_0\cdot\bar{H}^{d-1})>0$. Take the exact sequences
\begin{align*}
0 \to \cO_{\bar{X}}(K_{\bar{X}/X}-lE) \to \cO_{\bar{X}}(K_{\bar{X}/X}+E_0-lE) \to \cO_{E_0}(K_{E_0}-lE|_{E_0}) \to 0
\end{align*}
and consider the polynomial $Q(l):=\chi(\cO_{E_0}(K_{E_0}-lE|_{E_0}))$ of degree $d-1$ in $l$. A similar application of the vanishing theorem implies for $l\ge1$
\begin{align*}
Q(l)=\dim f_*\cO_{\bar{X}}(K_{\bar{X}/X}+E_0-lE)/f_*\cO_{\bar{X}}(K_{\bar{X}/X}-lE).
\end{align*}
The direct image sheaves $f_*\cO_{\bar{X}}(K_{\bar{X}/X}+E_0-lE)$ and $f_*\cO_{\bar{X}}(K_{\bar{X}/X}-lE)$ are subsheaves of the sheaf $\cO_X$ of regular functions on $X$. Since the only difference between them is $E_0$, the quotient $f_*\cO_{\bar{X}}(K_{\bar{X}/X}+E_0-lE)/f_*\cO_{\bar{X}}(K_{\bar{X}/X}-lE)$ is spanned by some of regular functions whose multiplicity along $E_0$ is exactly the coefficient of $E_0$ in $-(K_{\bar{X}/X}+E_0-lE)$, that is $lm_0-a_{E_0}(X)$. In particular the value of $Q(l)$ at $l\ge1$ is zero if $lm_0-a_{E_0}(X)<0$. On the other hand $Q(l)$ as a polynomial is of degree $d-1$, whence at least one of $Q(1), \ldots, Q(d)$ is non-zero. Thus $dm_0-a_{E_0}(X)\ge0$, and $\mld_xX \le a_{E_0}(X) \le dm_0 \le d(\sum_im_iE_i\cdot\bar{H}^{d-1})=d\mult_x X$.
\end{proof}

The above argument makes use of the property of Riemann--Roch formula that the intersection number of a divisor with $\bar{H}^{d-1}$, a kind of multiplicity, appears in its top term. It leads us to derive information of the relative canonical divisor $K_{\bar{X}/X}$ from its second-top term, in which the intersection number with $K_{\bar{X}/X}\cdot\bar{H}^{d-2}$ appears. Henceforth $x \in X$ is a germ of a Gorenstein canonical singularity of dimension $d\ge2$. We follow the above setting that $f \colon \bar{X} \to X$ is a log resolution such that the maximal ideal sheaf $\fm_x$ is pulled back to an invertible sheaf $\cO_{\bar{X}}(-E)$. Then $f^*H=\bar{H}+E$ for a general hyperplane section $H$ of $X$ with its strict transform $\bar{H}$. Set $K:=K_{\bar{X}/X}$, $E=\sum_im_iE_i$, $K':=\sum_ia'_iE_i$ and $a'_i+1=a_{E_i}(X)$, where the summations occur over the divisors $E_i$ contracting to the point $x$. The $K'$ is different from $K$ by the divisors which have centres of positive dimension, but $K'\cdot\bar{H}^{d-2}=K\cdot\bar{H}^{d-2}$ as $1$-cycles thanks to the freedom of $\bar{H}$. Let $X_t$ be a scheme obtained from $X$ by cutting out with $d-t$ general hyperplane sections through $x$. Then for $t\ge1$ the intersection $\bar{X_t}$ of their strict transforms on $\bar{X}$ is a log resolution of $X_t$. We discuss on $X_t$ with $t=2,1,0$ essentially because we see the first two terms of Riemann--Roch formula only. Set $S:=X_2$, $C:=X_1$ and $\cO_0:=\cO_{X_0}$. The lemma below is a naive application of Riemann--Roch theorem.

\begin{lemma}\label{lem:naiveRR}
Set $s_l:=\dim f_*\cO_{\bar{S}}(K-lE|_{\bar{S}})/f_*\cO_{\bar{S}}(K-(l+1)E|_{\bar{S}})$. Then
\setlength{\leftmargini}{2em}
\begin{enumerate}
\item\label{itm:EonS}
$(E\cdot\bar{H}^{d-1})=\mult_xX=s_{d-1}-s_{d-2}$.
\item\label{itm:KonS}
$(K'\cdot\bar{H}^{d-1})=(d-1)\mult_xX-2s_{d-2}$.
\end{enumerate}
\end{lemma}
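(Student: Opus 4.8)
The plan is to restrict the whole computation to the smooth surface $\bar{S}$, the intersection of the $d-2$ general members of the free system $|\bar{H}|$, which resolves the normal surface germ $S=X_2$ through $f_S:=f|_{\bar{S}}\colon\bar{S}\to S$; write $p$ for the centre and set $\mathcal{E}:=E|_{\bar{S}}$, $\mathcal{K}:=K|_{\bar{S}}$, $\bar{H}_S:=\bar{H}|_{\bar{S}}$, $H_S:=H|_S$. Restricting $\fm_x\cO_{\bar{X}}=\cO_{\bar{X}}(-E)$ and $f^*H=\bar{H}+E$ gives $\fm_p\cO_{\bar{S}}=\cO_{\bar{S}}(-\mathcal{E})$ and $\bar{H}_S=f_S^*H_S-\mathcal{E}$, so $\cO_{\bar{S}}(-\mathcal{E})$ is $f_S$-free, hence $f_S$-nef and $f_S$-big. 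Iterating adjunction along the cut-out sections yields the crucial identity $\mathcal{K}=K_{\bar{S}/S}+(d-2)\mathcal{E}=K_{\bar{S}}-f_S^*K_S+(d-2)\mathcal{E}$, which turns every second-top intersection number $(\,\cdot\,)\cdot\bar{H}^{d-2}$ on $\bar{X}$ into an honest intersection number on $\bar{S}$.

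Next I would record the two intersection numbers on $\bar{S}$ that feed the statement. Since $E$ is $f$-exceptional, the projection formula gives $f^*H\cdot E\cdot\bar{H}^{d-2}=0$, i.e.\ $\mathcal{E}\cdot\bar{H}_S+\mathcal{E}^2=0$; together with the Cohen--Macaulay equality $\mult_xX=E\cdot\bar{H}^{d-1}=\mathcal{E}\cdot\bar{H}_S$ used already in Theorem \ref{thm:emb_mult_mld} this forces $\mathcal{E}^2=-\mult_xX$. As $\mathcal{K}$ is $f_S$-exceptional one has $\mathcal{K}\cdot f_S^*H_S=0$, whence from $\bar{H}_S=f_S^*H_S-\mathcal{E}$ and the given $K'\cdot\bar{H}^{d-2}=K\cdot\bar{H}^{d-2}$ I obtain $K'\cdot\bar{H}^{d-1}=\mathcal{K}\cdot\bar{H}_S=-\mathcal{K}\cdot\mathcal{E}$; and $\mathcal{K}=K_{\bar{S}}-f_S^*K_S+(d-2)\mathcal{E}$ with $f_S^*K_S\cdot\mathcal{E}=0$ gives $\mathcal{K}\cdot\mathcal{E}=K_{\bar{S}}\cdot\mathcal{E}-(d-2)\mult_xX$.

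The heart of the proof is to identify $s_l$ with an Euler characteristic. From the restriction sequence $0\to\cO_{\bar{S}}(\mathcal{K}-(l+1)\mathcal{E})\to\cO_{\bar{S}}(\mathcal{K}-l\mathcal{E})\to\cO_{\mathcal{E}}((\mathcal{K}-l\mathcal{E})|_{\mathcal{E}})\to0$ and the relative vanishing $R^1f_{S*}\cO_{\bar{S}}(\mathcal{K}-m\mathcal{E})=0$ for $m\ge d-2$, I would deduce $s_l=\chi(\mathcal{E},(\mathcal{K}-l\mathcal{E})|_{\mathcal{E}})$ for $l\ge d-2$, since then both the sub- and the super-sheaf have vanishing $R^1f_{S*}$. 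Writing $\chi(\mathcal{E},(\mathcal{K}-l\mathcal{E})|_{\mathcal{E}})=\chi(\cO_{\mathcal{E}})+(\mathcal{K}-l\mathcal{E})\cdot\mathcal{E}$ with $\chi(\cO_{\mathcal{E}})=-\tfrac12\mathcal{E}\cdot(\mathcal{E}+K_{\bar{S}})$, the slope in $l$ is $-\mathcal{E}^2=\mult_xX$, which gives $s_{d-1}-s_{d-2}=\mult_xX$ and hence (i). For (ii) I evaluate at $l=d-2$: the computation above gives $(\mathcal{K}-(d-2)\mathcal{E})\cdot\mathcal{E}=K_{\bar{S}}\cdot\mathcal{E}$, so $2s_{d-2}=\mult_xX+K_{\bar{S}}\cdot\mathcal{E}$, and substituting $K_{\bar{S}}\cdot\mathcal{E}=2s_{d-2}-\mult_xX$ into $K'\cdot\bar{H}^{d-1}=-\mathcal{K}\cdot\mathcal{E}=(d-2)\mult_xX-K_{\bar{S}}\cdot\mathcal{E}$ yields exactly $(d-1)\mult_xX-2s_{d-2}$.

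The step deserving the most care is pushing the vanishing $R^1f_{S*}\cO_{\bar{S}}(\mathcal{K}-m\mathcal{E})=0$ all the way down to $m=d-2$, because this boundary case is precisely what makes $s_{d-2}$ itself an Euler characteristic and thereby controls $K'\cdot\bar{H}^{d-1}$ in (ii). For $m\ge d-1$ it follows from relative Kawamata--Viehweg, as $-\mathcal{E}$ is $f_S$-nef and $f_S$-big over the germ; but at $m=d-2$ one has $\mathcal{K}-(d-2)\mathcal{E}=K_{\bar{S}/S}$, where $-f_S^*K_S$ is only $f_S$-trivial, and a naive application of Kawamata--Viehweg would fall one short and leave an apparent $h^1(\mathcal{E},K_{\bar{S}/S}|_{\mathcal{E}})$ term. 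I would close this gap by invoking relative Grauert--Riemenschneider, $R^1f_{S*}\cO_{\bar{S}}(K_{\bar{S}/S})=R^1f_{S*}\omega_{\bar{S}}\otimes\omega_S^{\vee}=0$, exactly mirroring the use of the $l=0$ vanishing on $\bar{X}$ in Theorem \ref{thm:emb_mult_mld}. With that boundary case secured, the remainder is routine adjunction and Riemann--Roch bookkeeping on the surface $\bar{S}$.
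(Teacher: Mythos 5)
Your proof is correct and follows essentially the same route as the paper's: the same restriction exact sequence on $\bar{S}$, relative vanishing to identify $s_l$ with an Euler characteristic on $E|_{\bar{S}}$ for $l\ge d-2$, and Riemann--Roch on that curve combined with the projection-formula identities $(E|_{\bar{S}})^2=-\mult_xX$ and $K_{\bar{S}/S}=K-(d-2)E|_{\bar{S}}$. The only differences are presentational: you spell out the boundary vanishing $R^1f_{S*}\cO_{\bar{S}}(K_{\bar{S}/S})=0$ (via Grauert--Riemenschneider, where one could equally note that an $f_S$-nef divisor is automatically $f_S$-big for a birational morphism) and the final intersection-number bookkeeping, both of which the paper leaves implicit under ``the vanishing theorem'' and ``comparison of coefficients''.
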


\begin{proof}
$K_{\bar{S}/S}=K-(d-2)E|_{\bar{S}}$.
By the exact sequence
\begin{align*}
0 \to \cO_{\bar{S}}(K_{\bar{S}/S}-(l+1)E|_{\bar{S}}) \to \cO_{\bar{S}}(K_{\bar{S}/S}-lE|_{\bar{S}}) \to \cO_{E|_{\bar{S}}}(K_{E|_{\bar{S}}}-(l+1)E|_{E|_{\bar{S}}}) \to 0
\end{align*}
and the vanishing theorem, $P(l):=\chi(\cO_{E|_{\bar{S}}}(K_{E|_{\bar{S}}}-(l+1)E|_{E|_{\bar{S}}}))$ is equal to $s_{d-2+l}$ for $l\ge0$, whence $P(l)=(s_{d-1}-s_{d-2})l+s_{d-2}$ as a polynomial in $l$. On the other hand Riemann--Roch formula provides $P(l)=-(E|_{\bar{S}})^2l+\frac{1}{2}((K_{\bar{S}/S}-E|_{\bar{S}})\cdot E|_{\bar{S}})$. The lemma follows from comparison of coefficients in $P(l)$.
\end{proof}

This lemma is translated into the language of the curve $C$ or the artinian ring $\cO_0$ by the following \textit{inductive principle}. For a divisor $A$ on $\bar{X_s}$ which is effective outside $f^{-1}(x)$, we set $\cO_{X_t}(A):=f_*\cO_{\bar{X_t}}(A|_{\bar{X_t}}) \cap \cO_{X_t}$ for $1\le t\le s$ (the restriction $\cap \cO_{X_t}$ necessary only when $t=1$) , which consists of regular functions on $X_t$ with multiplicity at least the coefficient in $-A|_{\bar{X_t}}$ along every prime divisor of $\bar{X_t}$. For example $\cO_{X_t}(-E)$ is equal to the maximal ideal sheaf $\fm_x\cO_{X_t}$. Then with a function $h$ in $\fm_x\cO_{X_t}$ defining $X_{t-1}$ we have
\begin{align*}
&\dim\cO_{X_t}(A+E)/\cO_{X_t}(A) \\
=&\dim\cO_{X_t}(A+E)/(h\cO_{X_t}\cap\cO_{X_t}(A+E)+\cO_{X_t}(A)) \\
 &+\dim(h\cO_{X_t}\cap\cO_{X_t}(A+E)+\cO_{X_t}(A))/\cO_{X_t}(A) \\
=&\dim\cO_{X_t}(A+E)\cO_{X_{t-1}}/\cO_{X_t}(A)\cO_{X_{t-1}}+\dim(h\cO_{X_t}\cap\cO_{X_t}(A+E))/(h\cO_{X_t}\cap\cO_{X_t}(A)) \\
=&\dim\cO_{X_t}(A+E)\cO_{X_{t-1}}/\cO_{X_t}(A)\cO_{X_{t-1}}+\dim\cO_{X_t}(A+2E)/\cO_{X_t}(A+E).
\end{align*}
Hence $\dim\cO_{X_t}(A+E)/\cO_{X_t}(A)=\dim\cO_{X_{t-1}}/\cO_{X_t}(A)\cO_{X_{t-1}}$ by the inductive use of it. Set $\cO_0(A):=\cO_C(A)\cO_0$ and $\fm_0:=\fm_x\cO_0$. 

\begin{lemma}\label{lem:RRonC0}
\setlength{\leftmargini}{2em}
\begin{enumerate}
\item\label{itm:EKonC}
$s_l=\dim\cO_C/\cO_C(K-(l+1)E)$ for $l\ge d-2$.
\item\label{itm:Kon0}
$(K'\cdot\bar{H}^{d-1})=(d-1)\dim\cO_0-2\sum_{1\le l\le d-1} \dim\cO_0/\cO_0(K-lE)$.
\end{enumerate}
\end{lemma}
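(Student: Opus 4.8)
The plan is to obtain both formulae from Lemma~\ref{lem:naiveRR} by pushing its two identities down from the surface $S$ to the curve $C$ and then to the artinian ring $\cO_0$ through the inductive principle, the only genuine input being a single relative vanishing on the smooth surface $\bar{S}$. Throughout I use that $\bar{S}=\bar{X_2}$ is smooth, that $\fm_x\cO_{\bar S}=\cO_{\bar S}(-E|_{\bar S})$ so that $-E|_{\bar S}$ is $f$-nef, and that $K=K_{\bar X/X}$ is effective.

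For~(\ref{itm:EKonC}) I would run the inductive principle at $t=2$ with $A=K-(l+1)E$, which turns $s_l=\dim\cO_S(K-lE)/\cO_S(K-(l+1)E)$ into $\dim\cO_C/\cO_S(K-(l+1)E)\cO_C$; it then remains to match $\cO_S(A)\cO_C$ with $\cO_C(A)$. Since $\cO_S(A)\subseteq\cO_S$ restricts into $\cO_C(A)$, the content is the reverse inclusion, that is, surjectivity of the restriction $f_*\cO_{\bar S}(A|_{\bar S})\to f_*\cO_{\bar C}(A|_{\bar C})$ coming from
\begin{align*}
0\to\cO_{\bar S}(A|_{\bar S}-\bar C)\to\cO_{\bar S}(A|_{\bar S})\to\cO_{\bar C}(A|_{\bar C})\to0,
\end{align*}
which follows once $R^1f_*\cO_{\bar S}(A|_{\bar S}-\bar C)=0$. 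Using $\bar C=f^*(H|_S)-E|_{\bar S}$ for a general hyperplane $H$ and $K|_{\bar S}=K_{\bar S}-f^*K_S+(d-2)E|_{\bar S}$, I compute $A|_{\bar S}-\bar C-K_{\bar S}\equiv_f(d-2-l)E|_{\bar S}$ over $S$. For $l\ge d-2$ this is a nonnegative multiple of the $f$-nef divisor $-E|_{\bar S}$, so the vanishing is relative Kawamata--Viehweg. Surjectivity then forces $f_*\cO_{\bar C}(A|_{\bar C})\subseteq\cO_C$, whence $\cO_S(A)\cO_C=f_*\cO_{\bar C}(A|_{\bar C})=\cO_C(A)$, giving~(\ref{itm:EKonC}).

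For~(\ref{itm:Kon0}) I would start from Lemma~\ref{lem:naiveRR}(\ref{itm:KonS}), namely $(K'\cdot\bar H^{d-1})=(d-1)\mult_xX-2s_{d-2}$, and translate its two pieces. First $\mult_xX=\dim\cO_0$, because for the Cohen--Macaulay ring $\cO_{X,x}$ a general system of parameters is a minimal reduction of $\fm_x$ whose colength is the Hilbert--Samuel multiplicity. Second, by~(\ref{itm:EKonC}) with $l=d-2$ one has $s_{d-2}=\dim\cO_C/\cO_C(K-(d-1)E)$; since $K$ is effective $\cO_C(K)=\cO_C$, so telescoping
\begin{align*}
\dim\cO_C/\cO_C(K-(d-1)E)=\sum_{l=0}^{d-2}\dim\cO_C(K-lE)/\cO_C(K-(l+1)E)
\end{align*}
together with the inductive principle at $t=1$, which makes each summand equal to $\dim\cO_0/\cO_0(K-(l+1)E)$, yields $s_{d-2}=\sum_{1\le l\le d-1}\dim\cO_0/\cO_0(K-lE)$. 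Substituting both translations gives the formula.

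The main obstacle I anticipate is the vanishing in~(\ref{itm:EKonC}), and specifically the borderline $l=d-2$, where $A|_{\bar S}-\bar C-K_{\bar S}\equiv_f0$ is $f$-nef but not $f$-big; here I would instead invoke relative Grauert--Riemenschneider, $R^1f_*\cO_{\bar S}(K_{\bar S})=0$, combined with the projection formula applied to the pullback $-f^*(K_S+H|_S)$. I would also want to verify carefully that restriction keeps $-E$ relatively nef on the general smooth surface $\bar S$, and that the image of $\cO_S(A)$ under the honest restriction $\cO_S\to\cO_C$ agrees with the image produced by the pushed-forward restriction map, so that the surjectivity argument really identifies $\cO_S(A)\cO_C$ with $\cO_C(A)$.
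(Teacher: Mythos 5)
Your proposal is correct and follows essentially the same route as the paper: the inductive principle reduces (\ref{itm:EKonC}) to the identity $\cO_S(K-lE)\cO_C=\cO_C(K-lE)$ for $l\ge d-1$, proved by relative vanishing applied to the restriction sequence to $\bar{C}$, and (\ref{itm:Kon0}) is then Lemma \ref{lem:naiveRR}(\ref{itm:KonS}) translated through (\ref{itm:EKonC}), the telescoping sum and $\dim\cO_0=\mult_xX$. Your anticipated obstacle at the borderline $l=d-2$ is not a real one, since relative bigness is automatic for the birational morphism $f$, so relative Kawamata--Viehweg applies there just as well as your Grauert--Riemenschneider fallback.
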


\begin{proof}
By the inductive principle (\ref{itm:Kon0}) follows from (\ref{itm:EKonC}), Lemma \ref{lem:naiveRR}(\ref{itm:KonS}) and $\dim \cO_0=\mult_xX$. For (\ref{itm:EKonC}) it suffices to show that $\cO_S(K-lE)\cO_C=\cO_C(K-lE)$ for $l\ge d-1$, but it is an application of the vanishing theorem to the exact sequence
\begin{align*}
0 \to \cO_{\bar{S}}(K-lE|_{\bar{S}}-\bar{C}) \to \cO_{\bar{S}}(K-lE|_{\bar{S}}) \to \cO_{\bar{C}}(K-lE|_{\bar{C}}) \to 0.
\end{align*}
\end{proof}

We close this section by an important result of the artinian ring $\cO_0$.

\begin{proposition}\label{prp:artinian}
$\cO_0(K-dE)=0$. In particular $\fm_0^d=0$.
\end{proposition}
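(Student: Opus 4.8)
The proposition claims $\cO_0(K-dE)=0$, with the consequence $\fm_0^d=0$.The plan is to compute the length $\dim\cO_0/\cO_0(K-dE)$ and to show that it already exhausts all of $\cO_0$, i.e.\ that it equals $\dim\cO_0=\mult_xX$; since $\cO_0$ is artinian of this length, that equality forces $\cO_0(K-dE)=0$. The mechanism is that the \textit{inductive principle} collapses one graded piece of the filtration of $\cO_C$ into the entire quotient $\cO_0/\cO_0(K-dE)$, and Riemann--Roch evaluates that single graded piece to be exactly the multiplicity.

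Concretely, I would apply the inductive principle in the case $t=1$ (passing from $C$ to $X_0$) with the twist $A=K-dE$, so that $A+E=K-(d-1)E$ and $\cO_C(A)\cO_0=\cO_0(K-dE)$. This yields directly
\begin{align*}
\dim\cO_0/\cO_0(K-dE)=\dim\cO_C(K-(d-1)E)/\cO_C(K-dE).
\end{align*}
Now Lemma \ref{lem:RRonC0}(\ref{itm:EKonC}) applies with $l=d-1$ and with $l=d-2$, both admissible since $l\ge d-2$, so the right-hand side equals $s_{d-1}-s_{d-2}$. By Lemma \ref{lem:naiveRR}(\ref{itm:EonS}) this jump is $(E\cdot\bar{H}^{d-1})=\mult_xX$. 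As $\dim\cO_0=\mult_xX$, the quotient $\cO_0/\cO_0(K-dE)$ has the same length as $\cO_0$, whence $\cO_0(K-dE)=0$.

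For the consequence $\fm_0^d=0$ I would invoke the canonicity of $X$: each exceptional coefficient $a'_i=a_{E_i}(X)-1$ is non-negative, so $K'=\sum_ia'_iE_i$ is effective and $-dE\le K'-dE$ on $\bar{C}$. Since $\cO_C(\cdot)$ enlarges as its argument grows and $K|_{\bar{C}}=K'|_{\bar{C}}$, this gives $\cO_C(-dE)\subseteq\cO_C(K-dE)$, hence $\cO_0(-dE)\subseteq\cO_0(K-dE)=0$. On the other hand $\fm_x^d\cO_{\bar{X}}=\cO_{\bar{X}}(-dE)$, so any $g\in\fm_x^d$ restricts into $\cO_C(-dE)$; therefore $\fm_0^d=\fm_x^d\cO_0\subseteq\cO_0(-dE)=0$.

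The calculation is short once the two lemmas are available, and the substantive points are conceptual rather than computational. The key is choosing the twist $A=K-dE$ so that the inductive principle telescopes the full $\cO_0$-filtration into exactly one Riemann--Roch jump, together with the numerical coincidence that this jump $s_{d-1}-s_{d-2}$ equals the \emph{total} length $\mult_xX=\dim\cO_0$ --- it is this coincidence, not a mere inequality, that makes the module vanish. The only place where more than the Gorenstein canonical hypothesis is used is the passage to $\fm_0^d$, where effectivity of $K'$ enters; the one routine verification I would not skip is that the range $l\ge d-2$ in Lemma \ref{lem:RRonC0}(\ref{itm:EKonC}) indeed covers both values $l=d-1$ and $l=d-2$, which it does exactly, so the argument is valid already for $d=2$.
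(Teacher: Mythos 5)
Your proof is correct and follows essentially the same route as the paper: the inductive principle with $A=K-dE$ identifies $\dim\cO_0/\cO_0(K-dE)$ with $\dim\cO_C(K-(d-1)E)/\cO_C(K-dE)=s_{d-1}-s_{d-2}=\mult_xX=\dim\cO_0$, forcing the vanishing. Your explicit verification of the consequence $\fm_0^d=0$ via effectivity of $K'$ (canonicity) is a detail the paper leaves implicit, and it is done correctly.
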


\begin{proof}
The application of the inductive principle to Lemmata \ref{lem:naiveRR}(\ref{itm:EonS}) and \ref{lem:RRonC0}(\ref{itm:EKonC}) provides $\mult_xX=\dim\cO_C(K-(d-1)E)/\cO_C(K-dE)=\dim\cO_0/\cO_0(K-dE)$.
\end{proof}

\section{Dimension three}
In dimension three we recover the following result without explicit classification, such as \cite{L77} by Laufer, \cite{R75}, \cite{R80}, \cite{R83} by Reid.

\begin{theorem}\label{thm:dim3}
Let $x \in X$ be a Gorenstein terminal three-fold singularity. Then
\setlength{\leftmargini}{2em}
\begin{enumerate}
\item\label{itm:cDV}
A general hyperplane section of $X$ is canonical, proved by Reid in \cite{R83}.
\item\label{itm:BDD3}
$\mld_xX \le 3$, proved by Markushevich in \cite{Mk96}.
\end{enumerate} 
\end{theorem}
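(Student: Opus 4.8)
The plan is to extract everything from the single number $(K'\cdot\bar H^2)$ furnished by Lemma~\ref{lem:RRonC0}(\ref{itm:Kon0}), using that $\cO_0$ is Gorenstein. First I would refine Proposition~\ref{prp:artinian} to the full Gorenstein duality on the artinian ring $\cO_0$: with dualising module $\cO_0$ one has $\cO_0(K-lE)=(0:_{\cO_0}\fm_0^{\,d-l})$, so that $\dim\cO_0/\cO_0(K-lE)=\dim\fm_0^{\,d-l}$, the case $l=d$ recovering Proposition~\ref{prp:artinian} since $(0:_{\cO_0}\cO_0)=0$. For $d=3$, together with $\dim\cO_0=1+\dim\fm_0$, Lemma~\ref{lem:RRonC0}(\ref{itm:Kon0}) collapses to
\[
(K'\cdot\bar H^2)=2\dim\cO_0-2\bigl(\dim\fm_0^2+\dim\fm_0\bigr)=2-2\dim\fm_0^2 .
\]

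Next I would impose terminality (the case of smooth $x$ being trivial). Writing $K'=\sum_i a'_iE_i$ with $a'_i=a_{E_i}(X)-1\ge1$, integral because $X$ is Gorenstein terminal, and $d_i:=(E_i\cdot\bar H^2)\ge0$, Lemma~\ref{lem:naiveRR}(\ref{itm:EonS}) gives $\sum_i m_id_i=\mult_xX\ge2$, so some $d_{i_0}\ge1$ and $(K'\cdot\bar H^2)=\sum_i a'_id_i\ge1$. Being a nonnegative even integer it is then $\ge2$, which forces $\dim\fm_0^2=0$ and $(K'\cdot\bar H^2)=2$. Hence $\fm_0$ is the one-dimensional socle, $\mult_xX=\dim\cO_0=2$ and $\dim\fm_x/\fm_x^2=4$, so $X$ is a hypersurface double point. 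Part~(\ref{itm:BDD3}) drops out at once: $2=(K'\cdot\bar H^2)\ge a'_{i_0}d_{i_0}\ge a'_{i_0}$, whence $\mld_xX\le a_{E_{i_0}}(X)=a'_{i_0}+1\le3$.

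For part~(\ref{itm:cDV}) I would pass to the general hyperplane section $S$, which is normal and Gorenstein, and read $(K'\cdot\bar H^2)=2$ as a statement about the maximal ideal cycle $Z$ on a resolution where $\fm_x$ becomes invertible: there $-Z$ is nef, $Z^2=-\mult_xS=-2$, and since $K'|_{\bar S}=K_{\bar S/S}+Z$ with $f^*H$ trivial on the exceptional curves, the value $2$ forces $K_{\bar S/S}\cdot Z=0$, i.e. $p_a(Z)=0$. The goal is to conclude $K_{\bar S/S}=0$, that is $S$ Du Val. I would do this by comparing $Z$ with the canonical cycle $Z_K:=-K_{\bar S/S}$ on the minimal resolution, where $Z_K$ is effective with $Z_K\cdot C_i\le0$ for every exceptional $C_i$: transporting the identities above, $p_a(Z)=0$ together with the full support of $Z$ (a connectedness consequence of nefness of $-Z$) pin $Z_K\cdot Z=0$, whence $Z_K\cdot C_i=0$ for every $i$ and, by negative definiteness, $Z_K=0$; thus $S$ is canonical.

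The decisive step is the Gorenstein duality $\cO_0(K-lE)=(0:_{\cO_0}\fm_0^{\,d-l})$ refining Proposition~\ref{prp:artinian}, since it is what produces the clean formula $(K'\cdot\bar H^2)=2-2\dim\fm_0^2$ and hence couples terminality to both conclusions simultaneously. I expect the genuine difficulty to lie in part~(\ref{itm:cDV}): one must transport $Z^2=-2$ and $K_{\bar S/S}\cdot Z=0$ from the resolution where the maximal ideal is invertible to the minimal resolution---where that ideal may acquire base points, so that $p_a$ of the maximal ideal cycle is not literally resolution independent---in order to run the canonical-cycle argument. This is exactly the surface-singularity input that takes the place of, but must be carried out instead of, the detailed classification of elliptic surface singularities.
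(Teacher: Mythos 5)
Your part~(\ref{itm:BDD3}) is in substance the paper's argument: positivity of $(K'\cdot\bar{H}^2)$ from terminality, the socle bound coming from Proposition~\ref{prp:artinian} together with the Gorenstein property of $\cO_0$, and the resulting equalities $(K'\cdot\bar{H}^2)=2$, $\cO_0(K-E)=\cO_0$, $\cO_0(K-2E)\simeq k$, $\mult_xX=2$. One caveat: the ``full Gorenstein duality'' $\cO_0(K-lE)=(0:_{\cO_0}\fm_0^{d-l})$ is asserted, not proved, and nothing in the paper yields it. What Proposition~\ref{prp:artinian} gives is only the inclusion $\cO_0(K-lE)\subseteq(0:_{\cO_0}\fm_0^{d-l})$, since $\fm_0^{d-l}\cdot\cO_0(K-lE)\subseteq\cO_0(K-dE)=0$. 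That inclusion already yields $(K'\cdot\bar{H}^2)\le 2-2\dim\fm_0^2$, which together with positivity forces every equality you use, so this half is repairable by demoting your identity to an inequality --- which is exactly how the paper argues.

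The genuine gap is in part~(\ref{itm:cDV}), and it is the one you flag yourself but do not close: transporting $Z^2=-2$ and $K_{\bar{S}/S}\cdot Z=0$ from the resolution $\bar{S}$ on which $\fm_x\cO_{\bar{S}}$ is invertible down to the minimal resolution, where your canonical-cycle argument lives. Writing $\pi\colon\bar{S}\to\bar{S}_{\mathrm{min}}$, $E|_{\bar{S}}=\pi^*Z_{\mathrm{min}}+F$ and $K_{\bar{S}/S}=\pi^*K_{\bar{S}_{\mathrm{min}}/S}+G$ with $F,G\ge0$ and $\pi$-exceptional, the projection formula gives $K_{\bar{S}_{\mathrm{min}}/S}\cdot Z_{\mathrm{min}}=-(G\cdot F)$, and the sign of $G\cdot F$ is not controlled once $F$ and $G$ share components, i.e.\ once $\fm_x$ acquires base points on the minimal resolution (the situation the paper itself points at for Du Val points of type D or E). If $G\cdot F<0$ then $Z_K\cdot Z_{\mathrm{min}}<0$, which is perfectly compatible with $Z_K\cdot C_i\le0$ for all $i$, and the negative-definiteness step has nothing to bite on. The paper never descends to the minimal resolution; it instead exploits the extra identity $K|_{\bar{C}}=E|_{\bar{C}}$ (so that the contraction of the curves with positive coefficient in $K_{\bar{S}/S}$ is an isomorphism near $\bar{C}$) and then runs a $(-1)$-curve contraction argument that tracks $\bar{C}$ and the coefficients $m_i$, $a'_i$ to kill the negative part of $K_{T/S}$. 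Your sketch makes no use of $\bar{C}$ or of $K|_{\bar{C}}=E|_{\bar{C}}$; that is precisely the missing input, and without it (or an honest base-point analysis on the minimal resolution, which is essentially the elliptic-singularity theory the paper is designed to avoid) part~(\ref{itm:cDV}) is not proved.
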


The exceptional divisors $E_i$ treated by our numerical argument are only those the restriction of $\bar{H}$ to which is big, which explains the reason of the lack in (\ref{itm:BDD3}) of the characterisation of a smooth three-fold point that it has minimal log discrepancy three. For, a Du Val singularity of type D or E has a minimal resolution on which the strict transform of a general hyperplane section intersects only an exceptional curve along which every non-unit function has multiplicity at least two. Only this curve corresponds to the $E_i$ with $\bar{H}|_{E_i}$ big, which has $a'_i=m_i=2$, in the case when $S$ has such a singularity. Of course one can avoid this difficulty by \cite{EMY03}. Also note that the converse of (\ref{itm:cDV}), for an isolated Gorenstein three-fold singularity, is a simple application of the connectedness lemma.

\begin{proof}
In contrast to the historical context, we prove (\ref{itm:BDD3}) firstly. Lemma \ref{lem:RRonC0}(\ref{itm:Kon0}) with $d=3$ is
\begin{align*}
(K'\cdot\bar{H}^2)=2\dim\cO_0(K-2E)-2\dim\cO_0/\cO_0(K-E),
\end{align*}
which is positive as $X$ is terminal. We use the Gorenstein property of the artinian ring $\cO_0$ that its socle $(0:\fm_0)$ is isomorphic to $k$; a reference is \cite[Theorem 18.1]{Mt89}. The ideal $\cO_0(K-2E)$ is contained in the socle of $\cO_0$ by Proposition \ref{prp:artinian}, whence $\dim \cO_0(K-2E)\le1$. Thus $(K'\cdot\bar{H}^2)$ is positive only if $(K'\cdot\bar{H}^2)=2$, $\cO_0(K-E)=\cO_0$ and $\cO_0(K-2E)\simeq k$. Therefore $\mld_xX-1\le(\sum_ia'_iE_i\cdot\bar{H}^2)=2$, which is (\ref{itm:BDD3}).

We proceed more delicate analysis for (\ref{itm:cDV}). We assume that $X$ is singular, equivalently $\mult_x X\ge2$, since (\ref{itm:cDV}) is trivial when $X$ is smooth. We have already obtained $\cO_C(K-E)=\cO_C$, that is $K|_{\bar{C}} \ge E|_{\bar{C}}$. Also, $\mult_xX=\dim\cO_0=2$ by $\cO_0(K-E)=\cO_0$, $\cO_0(K-2E)\simeq k$ and $\cO_0(K-3E)=0$. Thus $E|_{\bar{C}}$ has the same degree $2$ as $K|_{\bar{C}}$ has, whence $K|_{\bar{C}}=E|_{\bar{C}}$.

We construct the contraction $\bar{S}\to T$ of all curves which have positive coefficients in $K_{\bar{S}/S}$, which is an isomorphism about $\bar{C}$ by $K_{\bar{S}/S}|_{\bar{C}}=K-E|_{\bar{C}}=0$. Write $K_{\bar{S}/S}=P-N$ with effective divisors $P,N$ which have no common components. If $P>0$ there exists an irreducible curve on $\bar{S}$ which has negative intersection number with $P$, hence so with $K_{\bar{S}/S}$; that is a $(-1)$-curve with positive coefficient in $K_{\bar{S}/S}$. By contracting such curves successively, we obtain a smooth surface $T$ with $K_{T/S}\le0$.

We want to prove $K_{T/S}=0$. Suppose not, then there exists an exceptional irreducible curve on $T$ which intersects the support of $K_{T/S}$ properly. It has negative intersection number with $K_{T/S}$, whence it is a $(-1)$-curve. By contracting such curves successively outside a neighbourhood of $\bar{C}$, we finally obtain a smooth surface $T'$ on which there exists an irreducible curve $l_0$ which intersects both $\bar{C}$ and another $l_1$ with negative coefficient in $K_{T'/S}$, where we set $l_i$ as the push-forward of $E_i|_{\bar{S}}$, possibly reducible when $\bar{H}|_{E_i}$ is not big. Then $l_0$ is a $(-1)$-curve, and $(\bar{C}\cdot l_0)_{T'}=(-\sum m_il_i\cdot l_0)_{T'}\le m_0-m_1$. But $m_0\le(E\cdot\bar{H}^2)=2$ and $m_1>a'_1\ge1$, a contradiction.
\end{proof}

\section{Dimension four}
In dimension four we bound minimal log discrepancies in a special case, just for which we introduce one ad hoc definition. For a germ $(x \in X,\Delta)$ of a pair, an algebraic valuation is called a \textit{terminal piece} if it has log discrepancy greater than one and in addition if it is defined by a divisor $F$ on a resolution $\bar{X}$ such that the maximal ideal sheaf is pulled back to an invertible sheaf $\cO_{\bar{X}}(-E)$ and $-E|_F$ is big.

\begin{theorem}\label{thm:dim4}
Let $x \in X$ be a Gorenstein terminal four-fold singularity whose general hyperplane section has a terminal piece. Then $\mult_xX\le2$ and $\mld_xX \le 5-\mult_xX$.
\end{theorem}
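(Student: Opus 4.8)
The plan is to imitate the threefold argument of Theorem \ref{thm:dim3}, now reading the multiplicity off the second-top term of Riemann--Roch in dimension four, with the terminal piece supplying the positivity that terminality alone no longer provides. We may assume $X$ singular, so that $m:=\mult_xX=\dim\cO_0\ge2$; the smooth case is trivial since then $\mld_xX=4=5-1$. Writing $e_l:=\dim\cO_0(K-lE)$, Lemma \ref{lem:RRonC0}(\ref{itm:Kon0}) with $d=4$ becomes
\[(K'\cdot\bar H^3)=2(e_1+e_2+e_3)-3m,\]
which is positive because $X$ is terminal. Choosing as in Theorem \ref{thm:emb_mult_mld}(\ref{itm:mult_mld}) a divisor $E_0$ with $\bar H|_{E_0}$ big gives $\mld_xX-1\le a'_0$ together with $a'_0(E_0\cdot\bar H^3)\le(K'\cdot\bar H^3)$, so the entire task reduces to bounding $e_1+e_2+e_3$ from above and pinning down $m$; concretely it suffices to prove $\cO_0(K-2E)=\cO_0$ and to force $(K'\cdot\bar H^3)=2$.

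The first ingredient is the Gorenstein structure of $\cO_0$. By Proposition \ref{prp:artinian} we have $\fm_0^4=0$, hence $\fm_0^{4-l}\cdot\cO_0(K-lE)\subseteq\cO_0(K-4E)=0$ and therefore $\cO_0(K-lE)\subseteq(0:\fm_0^{4-l})$. Since $(0:\fm_0)\simeq k$ this already gives $e_3\le1$, and the duality $\dim(0:I)=m-\dim I$ on the Gorenstein ring $\cO_0$ gives $e_2\le m-\dim\fm_0^2$ and $e_1\le m-\dim\fm_0^3$. These socle estimates are, however, attained in their extremal case $\cO_0(K-lE)=(0:\fm_0^{4-l})$, where $(K'\cdot\bar H^3)=3+c_1-c_2-3c_3$ with $c_j:=\dim\fm_0^j/\fm_0^{j+1}$ can be large and $m$ unbounded; that configuration would only yield $\mld_xX\le5$ and would not bound the multiplicity at all. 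Thus the estimates must be made strict, and it is exactly here that the terminal piece must intervene.

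The decisive ingredient is therefore the terminal piece of the general hyperplane section $Y=X_3$. This $Y$ is a Gorenstein canonical threefold whose Artinian reduction is again $\cO_0$, and by naive Riemann--Roch consistency $(K'_Y\cdot\bar H^2)=(K'\cdot\bar H^3)-m=2(e_1+e_2+e_3)-4m$. The terminal piece is a divisor $F$ over $Y$ with $a_F(Y)>1$ and $-E|_F$ big, and my plan is to run the finer surface analysis of Theorem \ref{thm:dim3}(\ref{itm:cDV}) on $\bar S=\bar{X_2}$, letting $F$ play the role that terminality of $X$ played there: the bigness of $-E|_F$ localises a genuine contribution on $\bar S$, while $a_F(Y)>1$ forces a strict inequality in the analogue of the socle bound. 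I expect this to yield $\cO_0(K-2E)=\cO_0$, whence $\dim\fm_0^2=0$ and so $\mult_xX=\dim\cO_0=2$ --- the fourfold counterpart of the fact, recovered inside Theorem \ref{thm:dim3}, that a singular Gorenstein terminal threefold has multiplicity two --- and simultaneously to control the big divisor so that $(K'\cdot\bar H^3)=2$, giving $\mld_xX\le3=5-\mult_xX$.

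The main obstacle is precisely this translation of the geometric hypothesis ``$-E|_F$ big and $a_F(Y)>1$'' into the ideal statement $\cO_0(K-2E)=\cO_0$ and into the sharp value of $(K'\cdot\bar H^3)$. As already flagged after Theorem \ref{thm:dim3}, the numerical argument only sees the divisors $E_i$ for which the relevant restriction is big, so the discrepancies cannot be read off directly; one must instead work on $\bar S$, contract the $(-1)$-curves lying in the positive part of the relative canonical, and control the configuration of the curves $E_i|_{\bar S}$ meeting the strict transform $\bar C$, now with the extra freedom of the fourth dimension recorded by $F$. Carrying this contraction analysis through while simultaneously tracking $m_i$ and $a'_i$ is the delicate point, and it is presumably also the structural reason the theorem must assume the existence of a terminal piece rather than treat an arbitrary hyperplane section.
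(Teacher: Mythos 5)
Your setup is sound --- the specialisation of Lemma \ref{lem:RRonC0}(\ref{itm:Kon0}) to $d=4$, the socle bounds $\cO_0(K-lE)\subseteq(0:\fm_0^{4-l})$ coming from Proposition \ref{prp:artinian}, and the correct diagnosis that these alone cannot bound the multiplicity --- but the proof stops exactly at the decisive step. Everything after ``my plan is to run the finer surface analysis'' is an expectation, not an argument: you never actually convert the hypothesis ``$-E|_F$ big and $a_F(X_3)>1$'' into $\cO_0(K-2E)=\cO_0$ or into $(K'\cdot\bar H^3)=2$, and you acknowledge as much in your final paragraph. Moreover the route you sketch (contracting $(-1)$-curves in the positive part of $K_{\bar S/S}$ as in Theorem \ref{thm:dim3}(\ref{itm:cDV})) is unlikely to close the gap as stated: that contraction argument in dimension three exploited the equality $K|_{\bar C}=E|_{\bar C}$, which was itself a consequence of $\mult_xX=2$ --- precisely the conclusion you are trying to reach --- and in the extremal Gorenstein configurations you describe the surface $\bar S$ carries no obvious contradiction. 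Note also that your target $(K'\cdot\bar H^3)=2$ is stronger than what is true or needed: the paper only proves $\mult_xX\le2$ and then invokes (PIA) for hypersurface singularities from \cite{EMY03} (or (BDD) for three-folds from \cite{Mk96}) to pass from the crude bound $\mld_xX-1\le(K'\cdot\bar H^3)\le4$ to $\mld_xX\le5-\mult_xX$; a purely numerical derivation of $(K'\cdot\bar H^3)=2$ is not on offer.

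The paper's actual mechanism for excluding $\mult_xX\ge3$ is quite different and worth internalising. Proposition \ref{prp:dim4}(\ref{itm:mult_atleast3}) first shows, via the right non-degenerate pairing $\cO_0(-E)/\cO_0(P-2E|_{\bar C})\times\cO_0(K-2E)/\cO_0(K-3E|_{\bar C}+P)\to k$ on the Gorenstein ring $\cO_0$ combined with a Riemann--Roch count on the curve $\bar C$, that $\mult_xX\ge(K'\cdot\bar H^3)$ and that $\cO_0(K-4E|_{\bar C}+P)\simeq k$ for every $0<P\le E|_{\bar C}$. The terminal piece supplies a divisor $E_0$ with $\bar H|_{E_0}$ big and $m_0<a'_0$; the inequality $\mult_xX\ge(K'\cdot\bar H^3)$, i.e.\ $\sum_i(m_i-a'_i)(E_i\cdot\bar H^3)\ge0$, then forces a second divisor $E_1$ with $\bar H|_{E_1}$ big and $m_1>a'_1$. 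The contradiction is produced on the curve $C$, not on the surface: the dimension counts yield a regular function $h\in\cO_C(K-3E|_{\bar C}+m_0Q_0+Q_1)$ whose multiplicity at a point $Q_0\in E_0|_{\bar C}$ is exactly $2m_0-a'_0<m_0$, forcing $h$ to be a unit, while $h$ must simultaneously vanish at $Q_1\in E_1|_{\bar C}$ to order $3m_1-a'_1-1>0$. It is this interplay between the duality structure of $\cO_0$ and multiplicities of functions on $C$ at two competing points --- not a contraction analysis on $\bar S$ --- that your proposal is missing.
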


We shall use either (PIA) on smooth varieties in \cite{EMY03} or the precise (BDD) for three-folds in \cite{Mk96}, in one place of the proof; otherwise $\mld_xX\le5$ is obtained. First we provide a proposition on all Gorenstein terminal four-fold singularities.

\begin{proposition}\label{prp:dim4}
Let $x \in X$ be a Gorenstein terminal four-fold singularity.
\setlength{\leftmargini}{2em}
\begin{enumerate}
\item\label{itm:mult_atmost2}
If $\mult_xX\le2$ then $\mld_xX \le 5-\mult_xX$.
\item\label{itm:mult_atleast3}
If $\mult_xX\ge3$ then $\mult_xX\ge(K'\cdot\bar{H}^3)$ and $\cO_0(K-4E|_{\bar{C}}+P)\simeq k$ for an arbitrary effective divisor $P$ on $\bar{C}$ with $0<P\le E|_{\bar{C}}$.
\end{enumerate}
In both cases, $0\le(K'\cdot\bar{H}^3)-\dim\cO_0(K-2E)/\cO_0(-2E)\le2$.
\end{proposition}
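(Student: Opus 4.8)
The plan is to read all three assertions off the artinian Gorenstein ring $\cO_0$, whose length is $n:=\mult_xX$ and whose socle $(0:\fm_0)$ is one-dimensional. Writing $\gamma_l:=\dim\cO_0(K-lE)$, Lemma \ref{lem:RRonC0}(\ref{itm:Kon0}) with $d=4$ becomes
\[
(K'\cdot\bar{H}^3)=2(\gamma_1+\gamma_2+\gamma_3)-3n .
\]
Proposition \ref{prp:artinian} gives $\cO_0(K-4E)=0$, so $\gamma_4=0$, and $\fm_0\cdot\cO_0(K-3E)\subseteq\cO_0(K-4E)=0$ forces $\cO_0(K-3E)$ into the socle, whence $\gamma_3\le1$. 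I will also use two elementary inclusions: $\fm_0=\cO_0(-E)\subseteq\cO_0(K-E)$ by canonicity, so $\gamma_1\ge n-1$; and $\fm_0^2\subseteq\cO_0(-2E)$.

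The crux is a Matlis-type duality on $\cO_0$. For any exceptional divisor $A$ over $x$ one has $\cO_0(A)\cdot\cO_0(K-4E-A)\subseteq\cO_0(K-4E)=0$, so $\cO_0(K-4E-A)\subseteq(0:\cO_0(A))$; together with the Gorenstein identity $\dim(0:I)=n-\dim I$ this already yields the one-sided bounds $\delta_l\le n-\gamma_{4-l}$, where $\delta_l:=\dim\cO_0(-lE)$. I would upgrade these to equalities, equivalently $(0:\cO_0(A))=\cO_0(K-4E-A)$, by Grothendieck--Serre duality for $f$ localised at the general section: on $\bar{S}$ one has $K_{\bar{S}/S}=K-2E|_{\bar{S}}$, and the symmetry $D\mapsto K_{\bar{S}}-D$ interchanges the two filtrations $\{\cO_0(-lE)\}$ and $\{\cO_0(K-lE)\}$, transported to $\cO_0$ through the inductive principle of Section 2. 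The cases $A=-E$ and $A=-2E$ then give $\gamma_3=\dim(0:\fm_0)=1$ and $\delta_2=n-\gamma_2$.

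Granting the duality, the three claims fall out. For the displayed inequality, $\dim\cO_0(K-2E)/\cO_0(-2E)=\gamma_2-\delta_2=2\gamma_2-n$, so
\[
(K'\cdot\bar{H}^3)-\dim\cO_0(K-2E)/\cO_0(-2E)=2(\gamma_1+\gamma_3-n)=2\dim\bigl(\cO_0(K-E)/\fm_0\bigr)\in\{0,2\},
\]
using $\gamma_3=1$ and $n-1\le\gamma_1\le n$; the upper bound $\le2$ needs only $\delta_2\le n-\gamma_2$ and $\gamma_3\le1$, hence is unconditional. For (\ref{itm:mult_atleast3}), when $n\ge3$ a Gorenstein artinian ring cannot have $\fm_0^2=0$, so $\delta_2\ge\dim\fm_0^2\ge1$ and thus $\gamma_2\le n-1$; then $\gamma_1+\gamma_2+\gamma_3\le n+(n-1)+1=2n$, that is $(K'\cdot\bar{H}^3)\le n=\mult_xX$. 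For the socle statement, $0<P\le E|_{\bar{C}}$ gives $\cO_0(-P)=\fm_0$ (since $\fm_0\subseteq\cO_0(-P)\subseteq\fm_0$), so duality with $A=-P$ yields $\dim\cO_0(K-4E|_{\bar{C}}+P)=n-(n-1)=1$, i.e.\ $\cO_0(K-4E|_{\bar{C}}+P)\simeq k$.

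Assertion (\ref{itm:mult_atmost2}) I would dispatch separately and elementarily: a Gorenstein, hence Cohen--Macaulay, singularity of multiplicity at most two satisfies $e\ge\operatorname{codim}+1$, so it is smooth or a hypersurface double point, and blowing up the closed point produces an exceptional divisor of log discrepancy $5-\mult_xX$, giving $\mld_xX\le5-\mult_xX$ at once. The main obstacle is the perfectness of the pairing: the easy annihilator inclusion suffices for every upper bound, but the equalities $\gamma_3=1$ and $\delta_2=n-\gamma_2$ -- and with them the lower bound $0\le(K'\cdot\bar{H}^3)-\dim\cO_0(K-2E)/\cO_0(-2E)$ and the precise isomorphism $\cO_0(K-4E|_{\bar{C}}+P)\simeq k$ -- rest on identifying the canonical-twist filtration as the Matlis dual of the multiplicity filtration, which I expect to require genuine duality on the surface section rather than the Euler-characteristic bookkeeping used so far.
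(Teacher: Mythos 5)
Your reductions of the easy halves are sound: the identity $(K'\cdot\bar{H}^3)=2(\gamma_1+\gamma_2+\gamma_3)-3n$, the bounds $\gamma_4=0$, $\gamma_3\le1$, $\gamma_1\ge n-1$, the annihilator inequality $\gamma_{4-l}\le n-\delta_l$ from $\cO_0(A)\cdot\cO_0(K-4E-A)=0$ together with $\dim(0:I)=n-\dim I$, and the observation that a Gorenstein artinian ring of length $\ge3$ has $\fm_0^2\ne0$ do give, unconditionally, the upper bound $(K'\cdot\bar{H}^3)-\dim\cO_0(K-2E)/\cO_0(-2E)\le2$ and the inequality $\mult_xX\ge(K'\cdot\bar{H}^3)$ when $\mult_xX\ge3$; this is a tidy variant of the paper's bookkeeping. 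But everything else hangs on the equality $(0:\cO_0(A))=\cO_0(K-4E-A)$, which you explicitly do not prove, and this is not a detail: it is precisely where the entire difficulty of the proposition sits. The paper never establishes (and never uses) such a duality. Instead it plays a one-sided inequality coming from the right non-degenerate pairing $\cO_0(-E)/\cO_0(P-2E|_{\bar{C}})\times\cO_0(K-2E)/\cO_0(K-3E|_{\bar{C}}+P)\to k$ against an \emph{exact} Riemann--Roch degree computation on the curve $\bar{C}$ (the identity $(K'\cdot\bar{H}^3)=\dim\cO_0(K-2E)/\cO_0(P-2E|_{\bar{C}})+(1-c_2)+c_3+\deg P$), and then shows that the only way the two can be compatible with $\deg P=1$ and $\cO_0(K-4E|_{\bar{C}}+P)=0$ is $\mult_xX=1$. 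That interplay is what yields both the lower bound $0\le(K'\cdot\bar{H}^3)-\dim\cO_0(K-2E)/\cO_0(-2E)$ and the isomorphism $\cO_0(K-4E|_{\bar{C}}+P)\simeq k$; note also that $\gamma_3=1$ itself is not a formal consequence of Gorensteinness but is extracted from terminality via $(K'\cdot\bar{H}^3)>0$. As stated, your duality would even give $\cO_0(K-4E|_{\bar{C}}+P)\simeq k$ with no hypothesis on the multiplicity, so any proof of it must secretly reuse this kind of geometric input; a formal appeal to Grothendieck--Serre duality on $\bar{S}$ will not produce the surjectivity of $\cO_0(K-4E-A)\to(0:\cO_0(A))$ for divisors $A=-P$ that are not multiples of $E$.

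The shortcut you propose for (\ref{itm:mult_atmost2}) is also incorrect. Abhyankar's inequality does reduce to a hypersurface (double) point in $\mathbb{A}^5$, but blowing up the closed point of $X$ does \emph{not} in general produce a divisor of log discrepancy $5-\mult_xX$ on $X$: for $X=(x_1^2+g=0)$ with $\ord g\ge3$, the exceptional divisor of the ambient blow-up restricts to a non-reduced divisor $2F$ on the strict transform, one computes $v_F(\fm_x)=2$ and $a_F(X)=5$, so no divisor of log discrepancy $3$ is exhibited. What the blow-up gives is $\mld_x(\mathbb{A}^5,X)\le5-\mult_xX$, and transferring this to $\mld_xX$ is exactly the hard direction of inversion of adjunction; this is why the paper invokes (PIA) from \cite{EMY03} (or, alternatively, the canonicity argument of Theorem \ref{thm:dim3} combined with (BDD) for three-folds from \cite{Mk96}) at this point. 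So both the lower bound, the socle statement in (\ref{itm:mult_atleast3}), and part (\ref{itm:mult_atmost2}) remain unproven in your proposal.
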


\begin{proof}
Lemma \ref{lem:RRonC0}(\ref{itm:Kon0}) with $d=4$ is
\begin{align}\label{eqn:KH3}
(K'\cdot\bar{H}^3)
=&\dim\cO_0(K-2E)-\dim\cO_0/\cO_0(K-2E) \\
\nonumber
&+2(\dim\cO_0(K-3E)-\dim\cO_0/\cO_0(K-E)).
\end{align}
The ideal $\cO_0(K-3E)$ is contained in the socle of $\cO_0$ by Proposition \ref{prp:artinian}, whence $\dim\cO_0(K-3E)\le1$. If $\cO_0(K-3E)=0$, then $\dim\cO_0(K-2E)\le1$ by the same reason, and $\cO_0(K-2E)=\cO_0$ by $(K'\cdot\bar{H}^3)>0$, whence $\dim\cO_0=1$. Therefore we assume that $\cO_0(K-3E)\simeq k$. Then by (\ref{eqn:KH3}),
\begin{align*}
\mult_xX-(K'\cdot\bar{H}^3)=2(\dim\cO_0/\cO_0(K-E)+\dim\cO_0/\cO_0(K-2E)-1).
\end{align*}
This is negative if $\mult_xX\le2$ with $\mld_xX\ge4$, and only if $\cO_0(K-2E)=\cO_0$, $\cO_0(-2E)=0$, $\mult_xX\le2$ and $\mld_xX-1\le(K'\cdot\bar{H}^3)\le4$. Then $X$ is a hypersurface singularity, whence (\ref{itm:mult_atmost2}) follows from (PIA) in \cite{EMY03}. Instead, supposing $\mld_xX\ge4$ one can deduce the canonicity of $S$ as in the proof of Theorem \ref{thm:dim3}(\ref{itm:cDV}), then $X_3$ as well as $X$ must be terminal by the connectedness lemma, and actually $X_3$ is smooth by (BDD) in \cite{Mk96} and $\cO_{X_3}(K-2E)\cO_S=\cO_S$ obtained as in the proof of Lemma \ref{lem:RRonC0}(\ref{itm:EKonC}). Anyway we can assume that $\mult_xX\ge(K'\cdot\bar{H}^3)$ besides $\cO_0(K-3E)\simeq k$ henceforth.

We adopt the notation $E\wedge K':=\sum_i\min\{m_i,a'_i\}E_i$ following \cite{BCHM06}. Let $P$ be an arbitrary effective divisor on $\bar{C}$ with $P\le E\wedge K'|_{\bar{C}}$ such that $\cO_0(K-4E|_{\bar{C}}+P)=0$; an example is $P=0$. The ideal $\cO_0(K-3E|_{\bar{C}}+P)$ is contained in the socle of $\cO_0$ but contains $\cO_0(K-3E)\simeq k$, whence $\cO_0(K-3E|_{\bar{C}}+P)\simeq k$. Consider the bilinear form
\begin{align*}
\cO_0(-E)/\cO_0(P-2E|_{\bar{C}}) \times \cO_0(K-2E)/\cO_0(K-3E|_{\bar{C}}+P) \to k,
\end{align*}
which is right non-degenerate because $\cO_0$ has the socle $\cO_0(K-3E|_{\bar{C}}+P)$. In particular $\dim\cO_0/\cO_0(P-2E|_{\bar{C}})\ge\dim\cO_0(K-2E)$. With (\ref{eqn:KH3}) we obtain that
\begin{align}\label{eqn:KH3inequality}
(K'\cdot\bar{H}^3)\le\dim\cO_0(K-2E)/\cO_0(P-2E|_{\bar{C}})+2(1-c_1),
\end{align}
where $c_1:=\dim\cO_0/\cO_0(K-E)\le1$.

We compute $(K'\cdot\bar{H}^3)$ in terms of $P$. Consider the exact sequence
\begin{align*}
0\to\cO_{\bar{C}}(P-3E|_{\bar{C}})\to\cO_{\bar{C}}(K-3E|_{\bar{C}})\to\cO_{K|_{\bar{C}}-P}(K-3E|_{K|_{\bar{C}}-P})\to0.
\end{align*}
We have checked $\cO_C(K-3E|_{\bar{C}})=f_*\cO_{\bar{C}}(K-3E|_{\bar{C}})$ in the proof of Lemma \ref{lem:RRonC0}(\ref{itm:EKonC}). Hence the direct image sheaf $f_*\cO_{\bar{C}}(P-3E|_{\bar{C}})$ also is contained in the structure sheaf $\cO_C$, which means $\cO_C(P-3E|_{\bar{C}})=f_*\cO_{\bar{C}}(P-3E|_{\bar{C}})$. Then the difference $\cO_C(K-3E)/\cO_C(P-3E|_{\bar{C}})$ of them has the same dimension $(K'\cdot\bar{H}^3)-\deg P$ as $\cO_{K|_{\bar{C}}-P}(K-3E|_{K|_{\bar{C}}-P})$ has. Thus with the inductive principle we have
\begin{align*}
&\dim\cO_C(K-2E)/\cO_C(P-2E|_{\bar{C}}) \\
=&\dim \cO_C(K-3E)/\cO_C(P-3E|_{\bar{C}}) \\
&+\dim\cO_C(K-2E)/\cO_C(K-3E)
-\dim\cO_C(P-2E|_{\bar{C}})/\cO_C(P-3E|_{\bar{C}}) \\
=&(K'\cdot\bar{H}^3)-\deg P+\dim\cO_0/\cO_0(K-3E)-\dim\cO_0/\cO_0(P-3E|_{\bar{C}}) \\
=&(K'\cdot\bar{H}^3)-\deg P-(1-c_2),
\end{align*}
where $c_2:=\dim\cO_0(P-3E|_{\bar{C}})\le1$. This principle also computes $\dim\cO_C(K-2E)/\cO_C(P-2E|_{\bar{C}})=\dim\cO_0(K-2E)/\cO_0(P-2E|_{\bar{C}})+c_3$, with $c_3:=\dim\cO_C(K-E)/\cO_C(P-E|_{\bar{C}})\le1$. Therefore
\begin{align}\label{eqn:KH3equality}
(K'\cdot\bar{H}^3)=\dim\cO_0(K-2E)/\cO_0(P-2E|_{\bar{C}})+(1-c_2)+c_3+\deg P,
\end{align}
and the inequalities on $(K'\cdot\bar{H}^3)$ follows from the case $P=0$.

Supposing $\deg P=1$ we shall conclude that $X$ is smooth, which completes the proof. Then $2c_1+c_3\le c_2$ by (\ref{eqn:KH3inequality}) and (\ref{eqn:KH3equality}). In particular $c_1=0$, that is $K|_{\bar{C}}\ge E|_{\bar{C}}$ and in fact $K|_{\bar{C}}=E|_{\bar{C}}$ by the assumption $\mult_xX\ge(K'\cdot\bar{H}^3)$. Hence $\cO_0(P-3E|_{\bar{C}})=\cO_0(K-4E|_{\bar{C}}+P)=0$, that is $c_2=0$. Therefore $c_3=0$, whence $P=E|_{\bar{C}}$ and $\mult_xX=\deg E|_{\bar{C}}=1$.
\end{proof}

\begin{proof}[Proof of Theorem \textup{\ref{thm:dim4}}]
We shall exclude the case (\ref{itm:mult_atleast3}) of Proposition \ref{prp:dim4}. Since $X_3$ has a terminal piece, there exists a divisor $E_0$ with $\bar{H}|_{E_0}$ big and $m_0<a'_0$. Suppose $\mult_xX\ge3$. Then there exists another $E_1$ with $\bar{H}|_{E_1}$ big and $m_1>a'_1$ by $\mult_xX\ge(K'\cdot\bar{H}^3)$ in Proposition \ref{prp:dim4}(\ref{itm:mult_atleast3}). For points $Q_0$ in $E_0|_{\bar{C}}$ and $Q_1$ in $E_1|_{\bar{C}}$, the proposition deduces $\cO_0(K-4E|_{\bar{C}}+m_0Q_0+Q_1)=\cO_0(K-4E|_{\bar{C}}+(m_0-1)Q_0+Q_1)\simeq k$. As the computation of $\dim\cO_C(K-2E)/\cO_C(P-2E|_{\bar{C}})$ in the proof of Proposition \ref{prp:dim4}, we can compute $\dim\cO_C(K-3E|_{\bar{C}}+m_0Q_0+Q_1)/\cO_C(K-3E|_{\bar{C}}+(m_0-1)Q_0+Q_1)=\dim\cO_C(K-4E|_{\bar{C}}+m_0Q_0+Q_1)/\cO_C(K-4E|_{\bar{C}}+(m_0-1)Q_0+Q_1)=1$. Therefore by the same reason as in the proof of Theorem \ref{thm:emb_mult_mld}(\ref{itm:mult_mld}), there exists a regular function $h \in \cO_C(K-3E|_{\bar{C}}+m_0Q_0+Q_1)$ on $C$ whose multiplicity at $Q_0$ is exactly the coefficient of $Q_0$ in $-(K-3E|_{\bar{C}}+m_0Q_0+Q_1)$, that is $2m_0-a'_0$. This happens only if $h$ is a unit in $\cO_C$ since $2m_0-a'_0<m_0$. But $h$ must have multiplicity at $Q_1$ at least the coefficient of $Q_1$ in $-(K-3E|_{\bar{C}}+m_0Q_0+Q_1)$, that is $3m_1-a'_1-1>0$, a contradiction.
\end{proof}

\section{Concluding remarks}
The idea of usage of Riemann--Roch theorem stems from an observation of the simplest case when a Gorenstein singularity $x \in X$ of dimension $d$ has a resolution $f \colon \bar{X} \to X$ with only one exceptional divisor $E_0$, mapped to $x$, such that $-E_0$ is $f$-ample. Then the exact sequence
\begin{align*}
0 \to \cO_{\bar{X}}(K_{\bar{X}/X}-(l+1)E_0) \to \cO_{\bar{X}}(K_{\bar{X}/X}-lE_0) \to \cO_{E_0}(K_{E_0}-(l+1)E_0|_{E_0}) \to 0
\end{align*}
with the vanishing theorem implies that $P(l):=\chi(\cO_{E_0}(K_{E_0}-(l+1)E_0|_{E_0}))$ is equal to $\dim f_*\cO_{\bar{X}}((a_{E_0}(X)-l-1)E_0)/f_*\cO_{\bar{X}}((a_{E_0}(X)-l-2)E_0)$ for $l\ge0$, whence $a_{E_0}(X)\le d$ as $P(l)$ is a polynomial of degree $d-1$ in $l$. This argument looks similar to that of Theorem \ref{thm:emb_mult_mld}, but in fact is completely different by the reason of the choice of $E_0$, a divisor divided by multiplicity, in place of $E=m_0E_0$. As far as our approach treats divisors such as $E$ appearing in the pull-back of ideal sheaves, it will derive properties of \textit{log canonical thresholds}, or more generally \textit{jumping coefficients} in \cite{ELSV04}, rather than those of the minimal log discrepancy, because it analyses the values $l$ in divisors of form $K-lD$ with $K$ canonical divisor, which encode information of $K$ divided by $D$, corresponding to thresholds. For example it implies that log canonical thresholds are bounded by dimension. This philosophy is reflected also by the aspect in positive characteristic that log canonical threshold has its correspondence, \textit{$F$-pure threshold} in \cite{TW04}, in contrast to minimal log discrepancy.

Theorems \ref{thm:dim3} and \ref{thm:dim4} are in virtue of the nature of the singularities concerned that they are characterised in terms of surfaces obtained by cutting out with general hyperplane sections, on which one can handle full Riemann--Roch formula without higher chern classes. Therefore there seem to exist two directions to advance our approach. One is to face also the lower terms in this formula, which represents analysis of cut-out varieties of higher dimension, whereas the other is to treat also divisors from non-maximal ideal sheaves, which represents analysis of special hyperplane sections.

\bibliographystyle{amsplain}

\end{document}